\def\dj{d\kern-0.4em\char"16\kern-0.1em}
\def\Dj{\mbox{\raise0.3ex\hbox{-}\kern-0.4em D}}
\newtheorem{theorem}{Theorem}[section]
\newtheorem{lemma}[theorem]{Lemma}
\newtheorem{proposition}[theorem]{Proposition}
\newtheorem{corollary}[theorem]{Corollary}
\newtheorem{definition}[theorem]{Definition}
\renewcommand{\@dotsep}{10000}
\newenvironment{proof}
{\noindent
{\it Proof.}}
{\hspace{\stretch{1}}%
$\Box$}
\newcounter{primer}[section]
\DeclareMathOperator{\diam}{diam}
\DeclareMathOperator{\Cen}{Cen}
\tikzset{my loop/.style =  {to path={
  \pgfextra{}
  [looseness=4,min distance=5mm]
  \tikz@to@curve@path},font=\sffamily\small
  }}  
\newcommand{\rightarrowpm}{\stackrel{\pm}{\rightarrow}}
\newcommand{\leftarrowpm}{\stackrel{\pm}{\leftarrow}}
\newcommand{\simp}{\stackrel{p}{\sim}}
\newcommand{\simppm}{\stackrel{p_\pm}{\sim}}
\newcommand{\myitem}[1]{%
\item[#1]\protected@edef\@currentlabel{#1}%
}
\begin{document}

\thispagestyle{empty}
\begin{center}
\Large{The Power Graph of a Torsion-Free Group Determines the Directed Power Graph}
\vspace{3mm}

\small{Samir Zahirovi\' c\\
{\it Department of Mathematics and Informatics, Faculty of Sciences,\\
University of Novi Sad, Serbia\\
\href{mailto:samir.zahirovic@dmi.uns.ac.rs}{samir.zahirovic@dmi.uns.ac.rs}}}
\end{center}

\begin{abstract}
The directed power graph $\vec{\mathcal G}(\mathbf G)$ of a group $\mathbf G$ is the simple digraph with vertex set $G$ such that $x\rightarrow y$ if $y$ is a power of $x$. The power graph of $\mathbf G$, denoted by $\mathcal G(\mathbf G)$, is the underlying simple graph.

In this paper, for groups $\mathbf G$ and $\mathbf H$, the following is proved. If $\mathbf G$ has no quasicyclic subgroup $\mathbf C_{p^\infty}$ which has trivial intersection with every cyclic subgroup $\mathbf K$ of $\mathbf G$ such that $\mathbf K\not\leq\mathbf C_{p^\infty}$, then $\mathcal G(\mathbf G)\cong \mathcal G(\mathbf H)$ implies  $\vec{\mathcal G}(\mathbf G)\cong \vec{\mathcal G}(\mathbf H)$. Consequently, any two torsion-free groups having isomorphic power graphs have isomorphic directed power graphs.
\end{abstract}

\section{Introduction}

The directed power graph of a group, which was introduced by Kelarev and Quinn \cite{prvi kelarev i kvin}, is the simple directed graph whose vertices are all elements of the group, and in which $x\rightarrow y$ if $y$ is a power of $x$, i.e.  $y=x^n$ for some $n\in\mathbb Z$. The power graph of a group is the underlying simple graph, and it was first studied by Chakrabarty, Ghosh and Sen \cite{cakrabarti}. The power graph has been the subject of many papers, including \cite{power graph, power graph 2, cameron3, on the structure, sitov, drugi kelarev i kvin, kelarev3, kelarev4, cameron4,samostalni,panda i krishna, anita i radzkumar, feng ma i vang, masa asrafi i arani, cameron-manna-mehatari, chatto-patra-sahoo, hamzeh-ashrafi, ma-feng-wang, ma-fu-lu, pourghobadi-jafari}. In these papers, combinatorial and algebraic properties of the power graph have received considerable attention, as well as the relation between a group and its power graph. For more details, the survey \cite{pregled za power grafove} is recommended.

Cameron \cite{power graph 2} proved that two finite groups that have isomorphic power graphs also have isomorphic directed power graphs. Cameron, Guerra and Jurina \cite{cameron3} proved that, for torsion-free groups $\mathbf G$ and $\mathbf H$ of nilpotency class $2$, $\mathcal G(\mathbf G)\cong\mathcal G(\mathbf H)$ implies $\vec{\mathcal G}(\mathbf G)\cong\vec{\mathcal G}(\mathbf H)$. The authors in \cite{cameron3} also asked whether this is also true when at least one of the groups is torsion-free of nilpotency class $2$. In \cite{samostalni} was given the affirmative answer to their question. This paper deals further with this problem. Here, the result from \cite{samostalni} is extended by proving that any two torsion-free groups that have isomorphic power graphs have isomorphic directed power graphs, too. Moreover, $\mathcal G(\mathbf G)\cong\mathcal G(\mathbf H)$ implies $\vec{\mathcal G}(\mathbf G)\cong\vec{\mathcal G}(\mathbf H)$ whenever $\mathbf G$ has no quasicyclic subgroup $\mathbf C_{p^{\infty}}$ which has trivial intersection with every cyclic subgroup of $\mathbf G$ not being contained in $\mathbf C_{p^{\infty}}$.

As mentioned before, various combinatorial properties of the power graph have been studied by many authors, although such investigations are not a part of this paper. Aalipour et al. \cite{on the structure} showed that the power graph of every group of a bounded exponent is perfect. They showed that the clique number of the power graph of any group is at most countable, and they posed the question of whether the power graph of every group has at most countable chromatic number. Shitov \cite{sitov} gave the affirmative answer to that question by proving that all power-associative groupoids, i.e. groupoids whose all one-generated subgroupoids are semigroups, have power graphs of at most countable chromatic numbers. Even though his result was combinatorial, it is one of his observations that affected some of the important proofs from this paper and \cite{samostalni}.

\section{Basic Notions and Notations}

{\bf Graph} $\Gamma$ is a structure $(V(\Gamma),E(\Gamma))$, or shortly $(V,E)$, where $V$ is a set, and where $E$ is a set of two-element subsets of $V$. Elements of $V$ are called vertices, and elements of $E$ are called edges of the graph $\Gamma$. Two edges $x$ and $y$ of $\Gamma$ are adjacent if $\{x,y\}\in E$. If $x$ and $y$ are adjacent, we write $x\sim_\Gamma y$, or shortly $x\sim y$. A graph $\Delta$ is a {\bf subgraph} of $\Gamma$ if the vertex set and the edge set of $\Delta$ are subsets of $V(\Gamma)$ and $E(\Gamma)$, respectively. We say that $\Delta$ is an {\bf induced subgraph} of $\Gamma$ if $V(\Delta)\subseteq V(\Gamma)$ and if, for any $x,y\in V(\Delta)$, $x\sim_\Gamma y$ if and only if $x\sim_\Delta y$. In this case we also say that $\Delta$ is the subgraph of $\Gamma$ induced by the set $V(\Delta)$, and the subgraph of $\Gamma$ induced by a set of vertices $X\subseteq V(\Gamma)$ is denoted by $\Gamma[X]$. The {\bf complement} of a graph $\Gamma$ is the graph $\overline \Gamma$ with the same vertex set as $\Gamma$ such that $x\sim_{\overline \Gamma}y$ if and only if $x\not\sim_\Gamma y$.

{\bf Directed graph}, or {\bf digraph}, $\vec\Gamma$ is a structure $(V(\vec\Gamma),E(\vec\Gamma))$, or shortly $(V,E)$, where $V$ is a set, whose elements are called vertices of $\vec\Gamma$, and where $E$ is a set of ordered pairs of different vertices of $\vec\Gamma$. If $(x,y)\in E$, then we say that $x$ is a direct predecessor of $y$, and that $y$ is a direct successor of $x$; in this case, we write $x\rightarrow_{\vec\Gamma} y$, or shortly $x\rightarrow y$.

{\bf Closed neighborhood} of a vertex $x$ of a graph $\Gamma$ is the set $\overline N_\Gamma(x)=\{y\mid y\sim_\Gamma x\text{ or }y=x\}$, and we may shortly denote it by $\overline N(x)$. If two vertices $x$ and $y$ of $\Gamma$ have the same closed neighborhood, then we write  $x\equiv_\Gamma y$, or simply $x\equiv y$. {\bf Path} in a graph $\Gamma$ is a sequence of different vertices $x_0,x_1,x_2,\dots,x_n$ such that $x_{i-1}\sim_\Gamma x_i$ for all $i\in\{1,2,\dots, n\}$, and the {\bf length} of this path is $n$. If, for every two vertices $x$ and $y$ of $\Gamma$, there is a path in $\Gamma$ connecting $x$ and $y$, i.e. in which $x=x_0$ and $y=x_n$, then we say that the graph $\Gamma$ is {\bf connected}. {\bf Connected component} of a graph $\Gamma$ is any maximal connected induced subgraph of $\Gamma$. {\bf Distance} between vertices $x$ and $y$ in a connected graph $\Gamma$, denoted by $d(x,y)$, is the minimal length of a path that connects $x$ and $y$. The maximal distance between two vertices of a connected graph is called the {\bf diameter} of the graph $\Gamma$, and it is denoted by $\diam(\Gamma)$. {\bf Clique} of graph $\Gamma$ is a set of its vertices which induces a complete subgraph of $\Gamma$, where a graph is complete if it has no pair of different non-adjacent vertices.

All through this paper, algebraic structures such as groups are denoted by bold capitals, and their universes are denote by respective regular capital letters. For elements $x$ and $y$ of a group $\mathbf G$ we write $x\approx_{\mathbf G} y$, or simply $x\approx y$, if $\langle x\rangle=\langle y\rangle$, where $\langle x\rangle$ denotes the subgroup generated by $x$. We denote the order of an element $x$ of a group by $o(x)$. We say that a subgroup $\mathbf H$ of $\mathbf G$ is {\bf intersection-free} if $\mathbf H\cap\mathbf K$ is trivial for all cyclic subgroups $\mathbf K$ of $\mathbf G$ such that $\mathbf K\not\leq \mathbf H$. In this paper, we deal with the power graph and the directed power graph of a group, and now we introduce the definitions of these graphs.

\begin{definition}\label{definicije pridruzenih grafova}
The {\bf directed power graph} of a group $\mathbf G$ is the digraph $\vec{\mathcal G}(\mathbf G)$ whose vertex set is $G$, and in which there is a directed edge from $x$ to $y$, $x\neq y$, if there exists $n\in\mathbb Z$ such that $y=x^n$. If there is a directed edge from $x$ to $y$ in $\vec{\mathcal G}(\mathbf G)$, we write $x\rightarrow_{\mathbf G} y$, or shortly $x\rightarrow y$.

The {\bf power graph} of a group $\mathbf G$ is the graph $\mathcal G(\mathbf G)$ whose vertex set is $G$, and whose vertices $x$ and $y$, $x\neq y$, are adjacent if there exists $n\in\mathbb Z$ such that $y=x^n$ or $x=y^n$. If $x$ and $y$ are adjacent in $\mathcal G(\mathbf G)$, we write $x\simp_{\mathbf G} y$, or shortly $x\simp y$.
\end{definition}

Throughout this paper, instead of dealing with the power graph of a group as defined above, it will be more convenient to state our arguments for the power graph as defined in the ensuing definition. To avoid any ambiguity, the power graph as defined in the following definition we call the $Z^\pm$-power graph.

\begin{definition}
The {\bf directed $Z^\pm$-power graph} of a group $\mathbf G$ is the digraph $\vec{\mathcal G}^{\pm}(\mathbf G)$ whose vertex set is $G$, and in which there is a directed edge from $x$ to $y$, $x\neq y$, if there exists $n\in\mathbb Z\setminus\{0\}$ such that $y=x^n$. If there is a directed edge from $x$ to $y$ in $\vec{\mathcal G}^\pm(\mathbf G)$, we write $x\rightarrowpm_{\mathbf G} y$, or shortly $x\rightarrowpm y$.

The {\bf $Z^\pm$-power graph} of a group $\mathbf G$ is the graph $\mathcal G^{\pm}(\mathbf G)$ whose vertex set is $G$, and in which $x$ and $y$, $x\neq y$, are adjacent if there exists $n\in\mathbb Z\setminus\{0\}$ such that $y=x^n$ or $x=y^n$. If $x$ and $y$ are adjacent in $\mathcal G^\pm(\mathbf G)$, we write $x\simppm_{\mathbf G} y$, or shortly $x\simppm y$.
\end{definition}

If elements $x$ and $y$ of a group $\mathbf G$ have the same closed neighborhood in $\mathcal G^\pm(\mathbf G)$, we write $x\equiv_{\mathbf G} y$, or simply $x\equiv y$. By the following theorem, which was proved in \cite{samostalni}, the results obtained in this paper about the $Z^\pm$-power graph of a group apply for the power graph, too.

\begin{theorem}[{\cite[Theorem 5]{samostalni}}]\label{medjusobno odredjivanje opsteg i nenula-stepenog grafa}
Let $\mathbf G$ and $\mathbf H$ be groups. Then $\mathcal G(\mathbf G)\cong\mathcal G(\mathbf H)$ if and only if $\mathcal G^{\pm}(\mathbf G)\cong\mathcal G^{\pm}(\mathbf H)$. 
\end{theorem}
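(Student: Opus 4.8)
The plan is to determine precisely how $\mathcal G(\mathbf G)$ and $\mathcal G^\pm(\mathbf G)$ differ, and then to recover each from the other by a recipe that depends only on the abstract graph. First I would record two elementary facts. If $x,y$ are distinct and both different from the identity $e$, then the exponent $0$ can never witness $y=x^n$ or $x=y^n$, so $x\simp y$ exactly when $x\simppm y$; hence the two graphs induce the same subgraph on $G\setminus\{e\}$. Second, if $x$ has infinite order and $x\simppm y$, then $y$ is a nonzero power of $x$ or vice versa, and in either case $y$ has infinite order; together with the symmetric statement for torsion elements this shows that in $\mathcal G^\pm(\mathbf G)$ no edge joins a torsion element to an element of infinite order. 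Writing $T$ for the set of torsion elements (so $e\in T$) and $T^{c}=G\setminus T$, these observations yield the structural picture: $\mathcal G^\pm(\mathbf G)$ is the disjoint union, with no edges between the two blocks, of $\mathcal G^\pm(\mathbf G)[T]$ and $\mathcal G^\pm(\mathbf G)[T^{c}]$, while $\mathcal G(\mathbf G)$ arises from $\mathcal G^\pm(\mathbf G)$ by adding exactly the edges $\{e,x\}$ with $x\in T^{c}$. Since $e$ is adjacent in $\mathcal G^\pm(\mathbf G)$ to every other torsion element, the block $\mathcal G^\pm(\mathbf G)[T]$ is connected, and it is precisely the connected component of $e$.

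Consequently the only discrepancy between the two graphs is the star joining $e$ to $T^{c}$, and the theorem reduces to recognizing, inside each graph and in an isomorphism-invariant fashion, a vertex in the role of $e$ together with the set $T^{c}$. Granting such a recognition, an isomorphism $\mathcal G^\pm(\mathbf G)\cong\mathcal G^\pm(\mathbf H)$ must send recognized data to recognized data, so after adjusting it by a suitable automorphism it carries $e$ to $e$ and $T$ to $T$; it then also respects the added star and is an isomorphism $\mathcal G(\mathbf G)\cong\mathcal G(\mathbf H)$, and symmetrically for the reverse implication. The easy half of the recognition is the passage from $\mathcal G^\pm$ to $\mathcal G$: once $e$ is located, its connected component is $T$ and the remaining vertices are $T^{c}$, so one simply joins $e$ to everything outside its component. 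For the passage from $\mathcal G$ to $\mathcal G^\pm$ one locates $e$ as a universal vertex (its closed neighborhood is all of $G$), deletes it to split the rest into the torsion block $\mathcal G^\pm(\mathbf G)[T\setminus\{e\}]$ and the infinite-order components, and re-attaches $e$ to the torsion block only.

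The heart of the argument, and the step I expect to be the main obstacle, is to make this recognition genuinely graph-theoretic, since it must survive several degeneracies. The identity is not in general the unique universal vertex: in $\mathbb Z$ the vertices $0,\pm1$ are all universal in $\mathcal G$ and mutually have the same closed neighborhood. The remedy is the twin relation $\equiv$ of equality of closed neighborhoods, which is isomorphism-invariant and whose classes may be permuted freely by graph automorphisms; one shows that all admissible choices of $e$ lie in a single such class and that the reconstruction produces isomorphic graphs for each choice, so that no canonical choice is needed. The truly delicate point is distinguishing a torsion connected component (whose incident identity-edges must be retained) from an infinite-order one (whose identity-edges must be cut), purely from adjacency and the closed-neighborhood containment order. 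This is subtle because a quasicyclic group $\mathbf C_{p^{\infty}}$ is a torsion group whose $Z^\pm$-power graph is a complete infinite graph, so one must prove that such complete, orderless-looking components nevertheless arise only from torsion and can be separated from the infinite-order blocks; it is exactly this quasicyclic phenomenon that later forces the hypothesis of the main theorem. Establishing the correct intrinsic criterion for this dichotomy, valid for arbitrary mixed and infinite groups, and checking its invariance under the twin permutations, is where the real work lies.
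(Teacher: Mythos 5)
First, a point of reference: the paper never proves this statement at all --- it is imported verbatim as \cite[Theorem 5]{samostalni} --- so your attempt can only be judged against what a complete proof requires, not against an argument in this text.

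Judged that way, your proposal has a genuine gap, and you name it yourself. Your first paragraph is correct bookkeeping ($\mathcal G(\mathbf G)$ and $\mathcal G^\pm(\mathbf G)$ agree off the identity, and differ exactly by the star joining $e$ to $T^c$), and the reduction-plus-twin-swap scheme in your second paragraph is sound \emph{once} one can recognize, inside the abstract graph and in an isomorphism-invariant way, the partition of the vertex set into $T$ and $T^c$. But that recognition is not a technical afterthought to be ``established'' later --- it \emph{is} the theorem. Deleting it, nothing beyond elementary structural facts has been proved. Worse, the half you call ``easy'' ($\mathcal G^\pm\Rightarrow\mathcal G$) needs the very same missing lemma: in $\mathcal G^\pm(\mathbb Z)$ the infinite-order component $\mathbb Z\setminus\{0\}$ contains vertices ($\pm 1$) adjacent to everything in it, so ``the component of a dominating vertex'' does not locate $T$, and one must argue --- as the paper does for its own purposes in Lemma \ref{konacni se slikaju na konacne} --- that any isomorphism of $Z^\pm$-power graphs carries $G_{<\infty}$ onto $H_{<\infty}$. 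Your proposal contains no such argument in either direction.

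To make concrete what is missing (and that your plan is completable rather than wrong): the standard criterion is via infinite cliques made of small twin classes. If $y$ has infinite order, then $\bigcup_{i\geq 1}\{y^{2^i},y^{-(2^i)}\}$ is an infinite clique which is a union of $\equiv$-classes of size exactly $2$ (for non-universal $x$ of infinite order one checks $[x]_\equiv=\{x,x^{-1}\}$, using witnesses such as $x^{n+1}$, which is adjacent to $x$ but not to $x^n$ for $\lvert n\rvert\geq 2$). Conversely, an infinite clique of torsion elements generates an infinite locally cyclic torsion group, hence contains elements of order $p^k$ with $k$ arbitrarily large, whose $\equiv$-classes have size at least $\varphi(p^k)>2$; so the torsion part contains no such clique. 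This dichotomy, together with the case analysis around universal vertices (in a group with both nontrivial torsion and infinite-order elements, $e$ is the \emph{unique} universal vertex of $\mathcal G(\mathbf G)$; complete power graphs arise only from torsion groups; in $\mathbb Z$ the universal class is $\{0,\pm 1\}$ and exactly one of them must be detached), is the actual content of \cite[Theorem 5]{samostalni}. As submitted, your text is a correct reduction and an accurate diagnosis of the difficulty, but not a proof.
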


\section{The Power Graph and the Directed Power Graph of a Group}\label{prva sekcija}

In this section, we prove that, if no quasicyclic subgroup of a group is intersection-free, then its power graph determines the directed power graph of the group. Consequently, the power graph of any torsion-free group determines the directed power graph of a group.  By Theorem \ref{medjusobno odredjivanje opsteg i nenula-stepenog grafa}, the power graph and the $Z^\pm$-power graph determine each other up to isomorphism. Therefore, it is justified to provide all proofs in this section for the $Z^\pm$-power graph and the directed $Z^\pm$-power graph instead of the power graph and the directed power graph.

For a group $\mathbf G$, $G_{<\infty}$ and $G_\infty$ denote the set of all elements of finite order of $\mathbf G$ and the set of all elements of infinite order, respectively. No element of infinite order of a group $\mathbf G$ is adjacent in the graph $\mathcal G^\pm(\mathbf G)$ to an element of finite order, and the identity element is adjacent to all non-identity element of finite order. Therefore, $G_{<\infty}$ induces a connected component of $\mathcal G^\pm(\mathbf G)$ (and $\vec{\mathcal G}^\pm(\mathbf G)$). The subgraph of $\mathcal G^\pm(\mathbf G)$ or $\vec{\mathcal G}^\pm(\mathbf G)$ induced by $G_{<\infty}$ we  call the {\bf finite-order component} of $\mathcal G^\pm(\mathbf G)$ or $\vec{\mathcal G}^\pm(\mathbf G)$, respectively. Similarly, the subgraph of $\mathcal G^\pm(\mathbf G)$ or $\vec{\mathcal G}^\pm(\mathbf G)$ induced by $G_{\infty}$ is called the {\bf infinite-order component} of $\mathcal G^\pm(\mathbf G)$ or $\vec{\mathcal G}^\pm(\mathbf G)$, respectively. Notice that, while the finite-order component of the $Z^\pm$-power graph is also its connected component, this may not be the case with the infinite-order component.

By the following lemma, for any group $\mathbf G$, an isomorphism from $\mathcal G^\pm(\mathbf G)$ onto $\mathcal G^\pm(\mathbf H)$ maps the finite-order component of $\mathcal G^\pm(\mathbf G)$ onto the finite-order component of $\mathcal G^\pm(\mathbf H)$.

\begin{lemma}\label{konacni se slikaju na konacne}
Let $\mathbf G$ and $\mathbf H$ be groups, and let $\varphi:G\rightarrow H$ be an isomorphism from $\mathcal G^\pm(\mathbf G)$ onto $\mathcal G^\pm(\mathbf H)$. Then $\varphi(G_{<\infty})=H_{<\infty}$.
\end{lemma}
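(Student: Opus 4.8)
The plan is to reduce the statement to the fact that a graph isomorphism carries connected components onto connected components, and then to separate the finite-order component from every infinite-order component by a purely graph-theoretic invariant. First I would record the two structural facts already established above: since no element of $G_\infty$ is $\simppm$-adjacent to an element of $G_{<\infty}$, every connected component of $\mathcal G^\pm(\mathbf G)$ is \emph{order-homogeneous} (all its vertices have finite order, or all have infinite order), and $G_{<\infty}$ is itself a single connected component. Hence $\varphi(G_{<\infty})$ is a connected component of $\mathcal G^\pm(\mathbf H)$, and by order-homogeneity it is either $H_{<\infty}$ or a component contained entirely in $H_\infty$. As $H_{<\infty}$ is itself one component, the first case already gives $\varphi(G_{<\infty})=H_{<\infty}$.

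It therefore suffices to show that the induced subgraph $\mathcal G^\pm(\mathbf G)[G_{<\infty}]$ is isomorphic to no infinite-order component of $\mathcal G^\pm(\mathbf H)$; running the same argument on $\varphi^{-1}$ then forces $\varphi(G_{<\infty})=H_{<\infty}$. The feature I would try to exploit is the identity: $e$ is $\simppm$-adjacent to every non-identity element of finite order, so $\overline N(e)=G_{<\infty}$ is the whole component, and moreover $e$ is a power of every element of $G_{<\infty}$, since $e=x^{o(x)}$ for each finite-order $x$. This second property is the essential one. The naive reading of it, namely ``the finite-order component is the unique component possessing a dominating vertex,'' does \emph{not} work: for $\mathbf G=\mathbb Z$ the infinite-order component $\mathbb Z\setminus\{0\}$ also carries dominating vertices, namely $\pm 1$, because every nonzero integer is a power of $1$. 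So the mere existence of a vertex adjacent to the whole component cannot separate the two cases.

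The genuine asymmetry is that in $G_{<\infty}$ the dominating vertex $e$ is a power of \emph{every} vertex of the component (a ``sink''), whereas an infinite-order component admits no such vertex: for any vertex $u$ of an infinite-order component, $u^2$ lies in the same component and $u$ is not a power of $u^2$, so no vertex is a power of all the others. The obstacle is that ``being a power of every vertex'' is a \emph{directed} notion, and I must find an undirected shadow of it that is invariant under graph isomorphism. Closed-neighborhood containment does not suffice either, since any dominating vertex $v$ automatically satisfies $\overline N(v)\supseteq\overline N(x)$ for all $x$ in its component; so the detection of the sink must come from a finer configuration than domination or neighborhood inclusion.

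The hard part, and the real content of the lemma, will thus be to rule out isomorphisms between $\mathcal G^\pm(\mathbf G)[G_{<\infty}]$ and an infinite-order component that happens to possess a dominating vertex (the $\mathbb Z$-type ``divisibility'' graph being the prototype). Here I would try to encode undirectedly the tower $\langle u\rangle\supsetneq\langle u^2\rangle\supsetneq\langle u^4\rangle\supsetneq\cdots$ produced by an infinite-order vertex: this yields, within a single infinite-order component, an unbounded strictly descending configuration of closed neighborhoods $\overline N(u)\supsetneq\overline N(u^2)\supsetneq\overline N(u^4)\supsetneq\cdots$ anchored at a dominating vertex, and I would argue that the common-power (sink) structure around $e$ cannot reproduce the corresponding pattern. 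The delicate point is that infinite \emph{torsion} groups can themselves exhibit long descending chains of closed neighborhoods, so the chain condition must be combined with the sink/dominating data in a way that is uniform over all torsion groups; turning this sink-versus-source dichotomy into a single clean, isomorphism-stable invariant is the step I expect to demand the most care.
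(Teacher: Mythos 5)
Your reduction is sound and matches the paper's setup: connected components of $\mathcal G^\pm(\mathbf G)$ are order-homogeneous, $G_{<\infty}$ induces a single component, so the lemma reduces to showing that the finite-order component is isomorphic to no infinite-order component. But your proof stops exactly where the lemma's content begins: you never produce an isomorphism-stable invariant separating the two kinds of components, and you say so yourself (``turning this sink-versus-source dichotomy into a single clean, isomorphism-stable invariant is the step I expect to demand the most care''). Moreover, the direction you sketch --- a dominating vertex together with an unbounded strictly descending chain of closed neighborhoods --- genuinely fails, not merely ``demands care.'' Take $\mathbf G=\mathbf C_{p^\infty}\oplus\mathbb Z_q$ with $p\neq q$ prime: every element has finite order, $e$ dominates the (unique) component, and for elements $x_n\in C_{p^\infty}$ of order $p^n$ one checks that an element $(y,b)$ with $b\neq 0$ is adjacent to $x_n$ exactly when $\langle x_n\rangle\leq\langle y\rangle$, so $\overline N(x_1)\supsetneq\overline N(x_2)\supsetneq\cdots$ is an infinite strictly descending chain anchored at a dominating vertex. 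Thus the configuration you propose to exploit occurs verbatim on the torsion side, confirming the obstruction you anticipated but did not overcome.

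The missing idea --- the one the paper uses --- is an invariant built from $\equiv$-classes rather than from neighborhood containments. For $y$ of infinite order, the set of powers $y^{\pm 2^i}$, $i\geq 1$, is an \emph{infinite} clique (exponents $\pm 2^i$ divide one another up to sign) which is a union of $\equiv$-classes of cardinality $2$, since for an infinite-order element $z$ the only vertex sharing its closed neighborhood is $z^{-1}$ (if $w=z^n$ with $\lvert n\rvert\geq 2$, then $z^{n+1}$ is adjacent to $z$ but not to $w$). In the finite-order component no such clique can exist: if $\lvert[x]_\equiv\rvert=2$, then since $[x]_\equiv$ contains all $\varphi(o(x))$ generators of $\langle x\rangle$ we get $\varphi(o(x))\leq 2$, i.e.\ $o(x)\leq 6$; and a clique all of whose elements have order at most $6$ lies inside $\langle x_0\rangle$ for an element $x_0$ of maximal order in the clique, hence is finite. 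So ``contains an infinite clique that is a union of $\equiv$-classes of cardinality $2$'' is a property preserved by graph isomorphisms, held by every infinite-order component and by no finite-order component, and the lemma follows at once. Without this (or some substitute) invariant, your argument is an outline of the difficulty rather than a proof.
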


\begin{proof}
Let $D$ induce a connected component of $\mathcal G^\pm(\mathbf H)$ that contains only elements of infinite order. Then, if $y\in D$, the set $\bigcup_{i\in\mathbb Z\setminus\{0\}}\{y^{2^i},y^{-(2^i)}\}$ is a clique which is a union of $\equiv_{\mathbf H}$-classes of cardinality $2$. On the other hand, the finite-order component of $\mathcal G^\pm(\mathbf G)$ does not contain such a clique. Therefore, any isomorphism from  $\mathcal G^\pm(\mathbf G)$ onto $\mathcal G^\pm(\mathbf H)$ maps $G_{<\infty}$ onto $H_{<\infty}$, which proves the lemma.
\end{proof}\\

The previous lemma justifies us to split the proof of the main result of this paper into two subsections, one in which we deal with isomorphisms between the infinite-order components, and the other in which we deal with isomorphisms between the finite-order components of the power graphs of two groups.

\subsection{Isomorphism between Infinite-Order Components}

In this subsection, it is proved that, if two groups $\mathbf G$ and $\mathbf H$ have isomorphic $Z^\pm$-power graphs, then infinite-order components of $\vec{\mathcal G}^\pm(\mathbf G)$ and $\vec{\mathcal G}^\pm(\mathbf H)$ are isomorphic too. Naturally, this also proves that any two torsion-free groups that have isomorphic $Z^\pm$-power graphs also have isomorphic directed $Z^\pm$-power graphs.

The next lemma tells us an important relationship between any two elements of infinite order belonging to the same connected component of the $Z^\pm$-power graph of a group.

\begin{lemma}\label{sve ciklicne u komponenti se seku}
Let $\mathbf G$ be a  group. Then, for any $x$ and $y$ belonging to the same connected component of the infinite-order component of $\mathcal G^\pm(\mathbf G)$, subgroups $\langle x\rangle$ and $\langle y\rangle$ have a non-trivial intersection.
\end{lemma}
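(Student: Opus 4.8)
The plan is to reduce the statement to a transitivity property of cyclic subgroups generated by infinite-order elements, which I would establish by a direct manipulation of exponents. Since $x$ and $y$ lie in the same connected component of the infinite-order component, there is a path $x=x_0,x_1,\dots,x_m=y$ in $\mathcal G^\pm(\mathbf G)$ in which every $x_i$ has infinite order and $x_{i-1}\simppm x_i$ for each $i$. I would then prove that $\langle x\rangle\cap\langle y\rangle$ is non-trivial by induction on the length $m$ of this path.

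For the base case I would consider two adjacent infinite-order elements $u\simppm v$. By the definition of $\mathcal G^\pm(\mathbf G)$ there is some $n\in\mathbb Z\setminus\{0\}$ with $v=u^n$ or $u=v^n$; in either case one of the two cyclic subgroups is contained in the other, and because both generators have infinite order, the intersection $\langle u\rangle\cap\langle v\rangle$ contains a non-identity element. Thus adjacent infinite-order vertices always generate non-trivially intersecting cyclic subgroups.

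The inductive step rests on the key observation that, for infinite-order elements, the relation ``the generated cyclic subgroups intersect non-trivially'' is transitive. Concretely, suppose $\langle x\rangle\cap\langle z\rangle$ and $\langle z\rangle\cap\langle y\rangle$ are both non-trivial, with $x,z,y$ of infinite order. Then there are nonzero integers $a,b,c,d$ with $x^a=z^b$ and $z^c=y^d$, the exponents being nonzero because a non-identity common element cannot be a zero power of an infinite-order element. Passing to the exponent $bc$ on $z$ and computing inside the individual cyclic subgroups, one obtains
\[
x^{ac}=(x^a)^c=(z^b)^c=z^{bc}=(z^c)^b=(y^d)^b=y^{db}.
\]
Since $ac\neq 0$ and $x$ has infinite order, $x^{ac}$ is a non-identity element lying in both $\langle x\rangle$ and $\langle y\rangle$, so these subgroups intersect non-trivially. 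Chaining this transitivity along the path from $x$ to $y$ would complete the induction.

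What makes the argument work is exactly the infinite-order hypothesis, which guarantees that every power with nonzero exponent is non-identity, so that the common power $x^{ac}=y^{db}$ produced by the displayed computation is genuinely non-trivial. I do not expect a serious obstacle here; I would only emphasize that the elements $x$, $z$, and $y$ need not commute with one another, since all the manipulations take place inside individual cyclic subgroups, where commutativity is automatic.
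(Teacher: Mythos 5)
Your proof is correct, but it takes a genuinely different route from the paper. You argue by induction along a path inside the infinite-order component, with the inductive step supplied by an explicit transitivity lemma: if $\langle x\rangle\cap\langle z\rangle$ and $\langle z\rangle\cap\langle y\rangle$ are non-trivial, then from $x^a=z^b$ and $z^c=y^d$ (all exponents nonzero) you get $x^{ac}=z^{bc}=y^{db}\neq e$, using only that $x$ has infinite order; this computation is valid, and your remark that everything happens inside single cyclic subgroups correctly disposes of any commutativity worry. The paper instead proves a stronger structural statement first, namely that every such connected component has diameter $2$: it takes a path of minimal length greater than $2$, locates consecutive vertices $a$, $b$, $c$ with $a\leftarrowpm b$ and $b\rightarrowpm c$, notes $\langle a\rangle\cap\langle c\rangle=\langle b^n\rangle$, and shortens the path by substituting $b^n$ for two of these vertices, a contradiction; the lemma then follows from $\diam\leq 2$ by essentially the same intersection argument you use for a single middle vertex. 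Your version buys simplicity and self-containedness: it avoids the path-shortening step, whose details (why a minimal path must contain the source pattern $a\leftarrowpm b\rightarrowpm c$, and why the substitution really shortens the path) are left to the reader in the paper. The paper's version buys the extra geometric fact that the component has diameter $2$, which is of independent interest, though it is not actually needed for the lemma nor invoked elsewhere in the paper.
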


\begin{proof}
Let $\Gamma=\mathcal G^\pm(\mathbf G)$, and let $C\subseteq G_{\infty}$ induce a connected component of $\Gamma$. Let us prove first that $\diam(\Gamma[C])=2$. Let $x,y\in C$ be such that $d_\Gamma(x,y)>2$. Therefore, in the path of minimal length from $x$ to $y$, there are consecutive elements $a$, $b$ and $c$ such that $a\leftarrowpm b$ and $b\rightarrowpm c$. Then, $\langle a\rangle\cap\langle c\rangle=\langle b^n\rangle$ for some $n\in\mathbb N$. Thus, we can make a shorter path from $x$ to $y$ by replacing vertices $a$ and $b$ or vertices $b$ and $c$ by vertex $b^n$. This way we get a shorter path from $x$ to $y$, which is a contradiction.

Finally, because $\diam(\Gamma[C])=2$, it follows that $\langle x\rangle\cap\langle y\rangle$ is non-trivial for any $x$ and $y$ that belong to the same connected component of the infinite-order component of $\mathcal G^\pm(\mathbf G)$.
\end{proof}\\

For an element $x$ of a group $\mathbf G$, let us define sets $I_{\mathbf G}(x)$, $O_{\mathbf G}(x)$ and $M_{\mathbf G}(x)$ as follows:
\begin{align*}
&I_{\mathbf G}(x)=\{y\in V\setminus\{x^{-1}\}\mid y\rightarrowpm_{\mathbf G} x\},\\
&O_{\mathbf G}(x)=\{y\in V\setminus\{x^{-1}\}\mid x\rightarrowpm_{\mathbf G} y\}\text{ and}\\
&M_{\mathbf G}(x)=I_{\mathbf G}(x)\cup O_{\mathbf G}(x).
\end{align*}
Sometimes we may shortly denote them by $I(x)$, $O(x)$ and $M(x)$, respectively. Furthermore, for a group $\mathbf G$ and its $Z^\pm$-power graph $\Gamma^\pm=\mathcal G^\pm(\mathbf G)$, $\overline{\mathcal O}_{\mathbf G}(x)$ and $\overline{\mathcal M}_{\mathbf G}(x)$ denote $\overline{\Gamma^\pm}[O(x)]$ and $\overline{\Gamma^\pm}[M(x)]$, respectively. Note that, for an element $x$ of infinite order of $\mathbf G$, one can recognize the element $x^{-1}$ as the only vertex which has the same closed neighborhood in $\mathcal G^\pm(\mathbf G)$ as the vertex $x$.

The following lemma has been proved by Cameron, Guerra and Jurina \cite{cameron3}. Although in the original paper it was proved for an element of a torsion-free group, it is proved analogously for an element of infinite order of any group.

\begin{lemma}[{\cite[Lemma 3.3]{cameron3}}]\label{O je komponenta povezanosti od N}
Let $\mathbf G$ be a group, and let $x$ be an element of infinite order of group $\mathbf G$. Then $\overline{\mathcal O}_{\mathbf G}(x)$ is a connected component of $\overline{\mathcal M}_{\mathbf G}(x)$.
\end{lemma}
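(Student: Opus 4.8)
The plan is to argue entirely inside $\Gamma^\pm=\mathcal G^\pm(\mathbf G)$, using only that $\langle x\rangle$ is infinite cyclic. First I would pin down the vertex set: because $x$ has infinite order its powers are pairwise distinct, and the conditions $y\neq x$, $y\neq x^{-1}$ implicit in the definition of $O(x)$ discard exactly $x^{\pm1}$, so $O_{\mathbf G}(x)=\{x^n\mid n\in\mathbb Z,\ |n|\geq2\}$. Next I would set up an adjacency dictionary: for distinct $x^i,x^j\in O(x)$ one has $x^i\simppm x^j$ iff $i\mid j$ or $j\mid i$ (equivalently $|i|\mid|j|$ or $|j|\mid|i|$), so in $\overline{\Gamma^\pm}$ they are adjacent iff neither exponent divides the other. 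With this, the lemma reduces to two statements: (a) $\overline{\mathcal O}_{\mathbf G}(x)=\overline{\Gamma^\pm}[O(x)]$ is connected, and (b) there is no edge of $\overline{\Gamma^\pm}$ joining a vertex of $O(x)$ to a vertex of $M(x)\setminus O(x)$. Together these say that $O(x)$ is a maximal connected set of $\overline{\mathcal M}_{\mathbf G}(x)$, i.e. a connected component.

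I expect (a) to be the crux, and I would handle it with a \emph{prime hub}. For each prime $q$ place the vertices $x^{q},x^{-q}$ into the hub. Given an arbitrary $x^i\in O(x)$, pick a prime $q>|i|$; then $q\nmid|i|$, and since the only divisors of $q$ are $1$ and $q$ while $2\leq|i|<q$, also $|i|\nmid q$, so $x^i$ is adjacent to $x^q$ in $\overline{\Gamma^\pm}$. Hence every vertex attaches to the hub. Within the hub, two vertices whose exponents have distinct prime absolute values are adjacent (distinct primes are incomparable under divisibility), while the two vertices $x^q,x^{-q}$ of a single pair, which are never directly adjacent, are joined through any $x^{q'}$ with $q'\neq q$ prime; so the hub is connected and therefore so is all of $\overline{\mathcal O}_{\mathbf G}(x)$. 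The delicate point is that deleting $x^{\pm1}$ is exactly what averts an isolated vertex, since the exponent $1$ would divide every other exponent, and that each antipodal pair $\{x^m,x^{-m}\}$ must be routed through a third vertex rather than connected directly.

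Finally, for (b) I would use transitivity of powers. A vertex $b\in M(x)\setminus O(x)=I(x)\setminus O(x)$ satisfies $b\rightarrowpm x$, say $x=b^m$ with $m\neq0$, and $b\neq x,x^{-1}$; this forces $b$ to have infinite order and $b\notin\langle x\rangle$, since $b=x^k$ together with $x=b^m$ would give $km=1$ and hence $b=x^{\pm1}$. Now any $a=x^n\in O(x)$ satisfies $a=x^n=(b^m)^n=b^{mn}$ with $mn\neq0$, and $a\neq b^{\pm1}$ (else $b\in\langle x\rangle$); thus $b\rightarrowpm a$, so $a\simppm b$ and $a,b$ are non-adjacent in $\overline{\Gamma^\pm}$. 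This proves (b), and combining it with (a) shows that $\overline{\mathcal O}_{\mathbf G}(x)$ is a connected component of $\overline{\mathcal M}_{\mathbf G}(x)$, as required.
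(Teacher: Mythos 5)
Your proof is correct. Note that the paper itself gives no proof of this lemma --- it only cites \cite[Lemma 3.3]{cameron3} and remarks that the torsion-free argument carries over to elements of infinite order --- and your two-step argument (connectivity of $\overline{\mathcal O}_{\mathbf G}(x)$ through the vertices $x^{\pm q}$ for large primes $q$, plus the observation that any $y\in I_{\mathbf G}(x)$ satisfies $x\in\langle y\rangle$ and is therefore adjacent in $\mathcal G^\pm(\mathbf G)$ to every element of $O_{\mathbf G}(x)$, so no complement edge leaves $O_{\mathbf G}(x)$ within $M_{\mathbf G}(x)$) is precisely the standard argument behind that citation.
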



To prove the main result of this paper, we shall start by Proposition \ref{komponenta gde vidimo pravac}, which deals with certain connected components of the infinite-order component of the $Z^\pm$-power graph in which it is possible to reconstruct the directions of arcs of the directed $Z^\pm$-power graph.

We say that a graph is {\bf almost connected} if it is a disjoint union of a connected graph and two copies of the trivial graph $K_1$. For a group $\mathbf G$, suppose that $C$ induces a connected component of $\mathcal G^\pm(\mathbf G)$ which contains only elements of infinite order, and let $x$ and $y$ be elements of $C$ non-adjacent in $\mathcal G^\pm(\mathbf G)$. Then $\overline{\mathcal O}(x)\cap\overline{\mathcal O}(y)$ is an almost connected graph. Namely, by Lemma \ref{sve ciklicne u komponenti se seku}, $\langle x\rangle\cap\langle y\rangle$ is an infinite cyclic subgroup of $\mathbf G$. Therefore, $\langle x\rangle\cap\langle y\rangle=\langle z\rangle$ for some $z\in C$, and $O(x)\cap O(y)=\langle z\rangle\setminus\{e\}=O(z)\cup \{z,z^{-1}\}$. Now we see that $z$ and $z^{-1}$ are isolated vertices of $\overline{\mathcal O}(x)\cap\overline{\mathcal O}(y)$. Also, $\big(O(x)\cap O(y)\big)\setminus\{z,z^{-1}\}=O(z)$ induces a connected subgraph of $\overline{\mathcal O}(x)\cap\overline{\mathcal O}(y)$ because, for any $n,m\in\mathbb Z\setminus\{-1,0,1\}$, there is $k>1$ relatively prime to both $n$ and $m$, and, therefore, vertices $z^n$ and $z^m$ are connected in $\overline{\mathcal O}(x)\cap\overline{\mathcal O}(y)$ with the path $z^n\simppm z^k\simppm z^m$. This observation will be useful in proofs of Proposition \ref{komponenta gde vidimo pravac} and Proposition \ref{komponenta gde mozda ne vidimo pravac}.

\begin{proposition}\label{komponenta gde vidimo pravac}
Let $\mathbf G$ and $\mathbf H$ be groups. Let $\varphi: G\rightarrow H$ be an isomorphism from $\mathcal G^\pm(\mathbf G)$ onto $\mathcal G^\pm(\mathbf H)$, and let $C\subseteq G$ induce a connected component of $\mathcal G^\pm(\mathbf G)$ which contains only elements of infinite order. If there are $x,y\in C$, $x\not\simppm_{\mathbf G} y$, such that $\overline{\mathcal M}_{\mathbf G}(x)\cap \overline{\mathcal M}_{\mathbf G}(y)$ is an almost connected graph, then $\varphi\rvert_C$ is an isomorphism  from $\big(\vec{\mathcal G}^\pm(\mathbf G)\big)[C]$ onto $\big(\vec{\mathcal G}^\pm(\mathbf H)\big)[\varphi(C)]$.
\end{proposition}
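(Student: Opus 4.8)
The plan is to prove the equivalent statement that $\varphi(O_{\mathbf G}(u))=O_{\mathbf H}(\varphi(u))$ for every $u\in C$; this is exactly the assertion that $\varphi\rvert_C$ preserves the direction of every arc, since $u\rightarrowpm v$ holds iff $v\in O_{\mathbf G}(u)\cup\{u^{-1}\}$. First I would record the preliminaries: by Lemma \ref{konacni se slikaju na konacne} the image $\varphi(C)$ consists of elements of infinite order, so every vertex involved has a well-defined inverse, recognizable as the unique vertex sharing its closed neighborhood; hence $\varphi(u^{-1})=\varphi(u)^{-1}$, the map $\varphi$ carries $M_{\mathbf G}(u)$ onto $M_{\mathbf H}(\varphi(u))$, and it restricts to an isomorphism $\overline{\mathcal M}_{\mathbf G}(u)\to\overline{\mathcal M}_{\mathbf H}(\varphi(u))$. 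Because $M_{\mathbf G}(u)=O_{\mathbf G}(u)\sqcup I_{\mathbf G}(u)$ with $O$ the proper powers and $I$ the proper roots, Lemma \ref{O je komponenta povezanosti od N} tells us that $O_{\mathbf G}(u)$ is one connected component of $\overline{\mathcal M}_{\mathbf G}(u)$ and every other component is contained in $I_{\mathbf G}(u)$. Thus the whole problem reduces to telling the out-component apart from the root-components, in both $\mathbf G$ and $\mathbf H$.

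Next I would establish the base case at $x$. Let $z$ generate $\langle x\rangle\cap\langle y\rangle$; by Lemma \ref{sve ciklicne u komponenti se seku} this intersection is infinite cyclic, and the paragraph preceding the proposition shows that $z,z^{-1}\in O_{\mathbf G}(x)\cap O_{\mathbf G}(y)$ are the two isolated vertices of the (always) almost connected graph $\overline{\mathcal O}_{\mathbf G}(x)\cap\overline{\mathcal O}_{\mathbf G}(y)$. The hypothesis that $\overline{\mathcal M}_{\mathbf G}(x)\cap\overline{\mathcal M}_{\mathbf G}(y)$ is again almost connected forces $M_{\mathbf G}(x)\cap M_{\mathbf G}(y)=O_{\mathbf G}(x)\cap O_{\mathbf G}(y)$: a common root $w$, with $x,y\in\langle w\rangle$, would satisfy $z\in\langle w\rangle$, so $w$ has no complement-edge to any vertex of $O_{\mathbf G}(z)\cup\{z,z^{-1}\}$, giving an additional connected component and contradicting almost-connectedness. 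Hence the two isolated vertices of $\overline{\mathcal M}_{\mathbf G}(x)\cap\overline{\mathcal M}_{\mathbf G}(y)$ are precisely $z,z^{-1}\in O_{\mathbf G}(x)$, and $O_{\mathbf G}(x)$ is identified as the connected component of $\overline{\mathcal M}_{\mathbf G}(x)$ containing $z$. As $\varphi$ preserves almost-connectedness, it sends $\{z,z^{-1}\}$ to the two isolated vertices of $\overline{\mathcal M}_{\mathbf H}(\varphi(x))\cap\overline{\mathcal M}_{\mathbf H}(\varphi(y))$, which for the same reason lie in $O_{\mathbf H}(\varphi(x))$; mapping components to components then yields $\varphi(O_{\mathbf G}(x))=O_{\mathbf H}(\varphi(x))$.

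The engine is a propagation step: if $\varphi(O_{\mathbf G}(a))=O_{\mathbf H}(\varphi(a))$ and $b\simppm_{\mathbf G} a$, then $\varphi(O_{\mathbf G}(b))=O_{\mathbf H}(\varphi(b))$. Adjacency makes $\langle a\rangle$ and $\langle b\rangle$ comparable, so one of $O_{\mathbf G}(a),O_{\mathbf G}(b)$ contains the other. Now $\varphi(O_{\mathbf G}(b))$ is an infinite connected component of $\overline{\mathcal M}_{\mathbf H}(\varphi(b))$, hence it is either $O_{\mathbf H}(\varphi(b))$ or a root-component inside $I_{\mathbf H}(\varphi(b))$. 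Suppose the latter. Writing $\varphi(b)=\varphi(a)^{s}$ (which holds outright when $a\rightarrowpm b$, and follows from $\varphi(a)$ being a root of $\varphi(b)$ when $b\rightarrowpm a$), the assumption forces infinitely many elements $\varphi(a)^{t}$ of $O_{\mathbf H}(\varphi(a))$ to be roots of $\varphi(b)$, i.e. $t\mid s$ for infinitely many integers $t$ with $|t|\ge 2$. This is impossible for the fixed nonzero $s$, so $\varphi(O_{\mathbf G}(b))=O_{\mathbf H}(\varphi(b))$. Finally, since $C$ is connected I would induct along paths from $x$: the base case together with the propagation step gives $\varphi(O_{\mathbf G}(u))=O_{\mathbf H}(\varphi(u))$ for every $u\in C$, so $\varphi\rvert_C$ is an isomorphism of directed $Z^\pm$-power graphs.

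I expect the main obstacle to be exactly the step distinguishing the out-component $O(u)$ from the root-components of $\overline{\mathcal M}(u)$: Lemma \ref{O je komponenta povezanosti od N} only singles out $O(u)$ as \emph{some} component, and a priori $\varphi$ could carry it to a root-component of $\overline{\mathcal M}(\varphi(u))$. The almost-connected hypothesis supplies the marked vertices $z,z^{-1}$ that pin down $O(x)$ at the base, and the divisibility/finiteness contradiction is what rules out root-components during propagation; making these two mechanisms mesh is where the care lies.
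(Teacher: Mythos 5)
Your proof is correct, but its architecture is genuinely different from the paper's. The paper's proof is not inductive: it fixes the ``marker set'' $M_{\mathbf G}(x)\cap M_{\mathbf G}(y)$, shows (exactly as you do) that almost-connectedness forces it to equal $O_{\mathbf G}(x)\cap O_{\mathbf G}(y)=\langle z\rangle\setminus\{e\}$, and then settles every arc $u\rightarrowpm_{\mathbf G} v$ of $C$ in one stroke: by Lemma \ref{sve ciklicne u komponenti se seku} applied to $u$ and $z$, the component $O_{\mathbf G}(u)$ of $\overline{\mathcal M}_{\mathbf G}(u)$ containing $v$ meets the marker set in an infinite set; $\varphi$ transports this to the corresponding marker set of $\mathbf H$; and a component lying inside $I_{\mathbf H}\big(\varphi(u)\big)$ can meet an infinite cyclic group only finitely often (a fixed element of infinite order has only finitely many roots in such a group), so the component containing $\varphi(v)$ must be $O_{\mathbf H}\big(\varphi(u)\big)$. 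You rest on the same two pillars---Lemma \ref{O je komponenta povezanosti od N} and the scarcity of roots in a cyclic group---but you localize Lemma \ref{sve ciklicne u komponenti se seku} to the single base pair, pinning down $O_{\mathbf G}(x)$ via the isolated vertices $z,z^{-1}$, and then propagate $\varphi\big(O_{\mathbf G}(\cdot)\big)=O_{\mathbf H}\big(\varphi(\cdot)\big)$ edge-by-edge along paths in $C$. What each buys: the paper's global argument is shorter and avoids induction, while yours makes explicit the $O$-versus-$I$ discrimination that the paper compresses into the words ``which implies that $\varphi(u)\rightarrowpm_{\mathbf H}\varphi(v)$''. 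One point in your propagation step should be restated: when $b\rightarrowpm_{\mathbf G} a$, the unconditional consequence of the hypothesis $\varphi\big(O_{\mathbf G}(a)\big)=O_{\mathbf H}\big(\varphi(a)\big)$ is $\varphi(a)\in\langle\varphi(b)\rangle$ (via $\varphi\big(I_{\mathbf G}(a)\big)=I_{\mathbf H}\big(\varphi(a)\big)$), which in fact \emph{excludes} $\varphi(b)\in\langle\varphi(a)\rangle$; the relation $\varphi(b)=\varphi(a)^{s}$ you invoke is legitimate only inside the reductio, because $a\in O_{\mathbf G}(b)$ and the assumed root-component $\varphi\big(O_{\mathbf G}(b)\big)\subseteq I_{\mathbf H}\big(\varphi(b)\big)$ then contains $\varphi(a)$, making $\varphi(a)$ a root of $\varphi(b)$. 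Read that way, your divisibility contradiction is sound and the whole argument goes through.
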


\begin{proof}
Let us denote $\varphi(C)$ by $D$. By Lemma \ref{konacni se slikaju na konacne}, $D$ contains only elements of infinite order. By Lemma \ref{sve ciklicne u komponenti se seku}, then $\emptyset\neq O_{\mathbf G}(x)\cap O_{\mathbf G}(y)\subseteq M_{\mathbf G}(x)\cap M_{\mathbf G}(y)$. Further, $I_{\mathbf G}(x)\cap M_{\mathbf G}(y)=I_{\mathbf G}(y)\cap M_{\mathbf G}(x)=\emptyset$, because otherwise, by Lemma \ref{O je komponenta povezanosti od N}, $\overline{\mathcal M}_{\mathbf G}(x)\cap \overline{\mathcal M}_{\mathbf G}(y)$ would not be almost connected. Therefore, $M_{\mathbf G}(x)\cap M_{\mathbf G}(y)= O_{\mathbf G}(x)\cap O_{\mathbf G}(y)$. Similarly, $M_{\mathbf H}\big(\varphi(x)\big)\cap M_{\mathbf H}\big(\varphi(y)\big)=O_{\mathbf H}\big(\varphi(x)\big)\cap O_{\mathbf H}\big(\varphi(y)\big)$.

Suppose that $u\rightarrowpm_{\mathbf G} v$ for some $u,v\in C$. If $v=u^{-1}$, then $u\equiv_{\mathbf G} v$. This would imply that $\varphi(u)\equiv_{\mathbf H}\varphi(v)$, and that $\varphi(v)=(\varphi(u))^{-1}$. So suppose that $v\neq u^{-1}$. Let us prove that $\varphi(u)\rightarrowpm_{\mathbf H} \varphi(v)$. Because $v\in O_{\mathbf G}(u)$ and by Lemma \ref{sve ciklicne u komponenti se seku}, the connected component of $\overline{\mathcal M}_{\mathbf G}(u)$ which contains $v$ has infinite intersection with $M_{\mathbf G}(x)\cap M_{\mathbf G}(y)$. It follows that the connected component of $\overline{\mathcal M}_{\mathbf H}\big(\varphi(u)\big)$ which contains $\varphi(v)$ has infinite intersection with $M_{\mathbf H}\big(\varphi(x)\big)\cap M_{\mathbf H}\big(\varphi(y)\big)$, which implies that $\varphi(u)\rightarrowpm_{\mathbf H}\varphi(v)$. It is analogously proved that $\varphi(u)\rightarrowpm_{\mathbf H}\varphi(v)$ implies $u\rightarrowpm_{\mathbf G} v$. Therefore, the mapping $\varphi\lvert_C$ is an isomorphism  from $\big(\vec{\mathcal G}^\pm(\mathbf G)\big)[C]$ onto $\big(\vec{\mathcal G}^\pm(\mathbf H)\big)[\varphi(C)]$.
\end{proof}\\

In the remained of this subsection, we deal with the rest of the connected components of the infinite-order component of the $Z^\pm$-power graph of a group. The following theorem, which was proved in \cite{samostalni}, will serve as a useful tool here.


%
%
%


%
%

\begin{theorem}[{\cite[Theorem 21]{samostalni}}]\label{odredjivanje usmerenog kod torziono slobodne klase nilpotentnosti 2}
Let $\mathbf G$ be a torsion-free group of nilpotency class $2$, and let $\mathbf G$ be a group such that $\mathcal G^\pm(\mathbf G)\cong \mathcal G^\pm(\mathbf H)$. Then $\vec{\mathcal G}^\pm(\mathbf G)\cong\vec{\mathcal G}^\pm(\mathbf H)$.
\end{theorem}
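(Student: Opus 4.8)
The plan is to move everything onto the infinite-order components and there to separate two kinds of components, dispatching one kind by Proposition \ref{komponenta gde vidimo pravac} and the other by a direct analysis of locally cyclic subgroups. First I would note that $\mathbf H$ is itself torsion-free: since $\mathbf G$ is torsion-free we have $G_{<\infty}=\{e\}$, so by Lemma \ref{konacni se slikaju na konacne} the set $H_{<\infty}=\varphi(G_{<\infty})$ is a singleton, and a group whose only element of finite order is the identity is torsion-free. The finite-order components of both graphs are then single isolated vertices carrying no arcs, so they match trivially. Since any graph isomorphism sends connected components to connected components, $\varphi$ induces a bijection between the connected components of the infinite-order part of $\mathcal G^\pm(\mathbf G)$ and those of $\mathcal G^\pm(\mathbf H)$; I would build the directed isomorphism one component at a time and take the disjoint union, which is legitimate because $\vec{\mathcal G}^\pm$ has no arcs joining distinct components.

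Next I would call a component $C$ \emph{good} if it contains non-adjacent $x,y$ with $\overline{\mathcal M}_{\mathbf G}(x)\cap\overline{\mathcal M}_{\mathbf G}(y)$ almost connected, and \emph{bad} otherwise. The key observation is that goodness is a property of the undirected $Z^\pm$-power graph: for an element of infinite order, $x^{-1}$ is recognizable as the unique vertex with the same closed neighborhood as $x$, so $M_{\mathbf G}(x)=\overline N(x)\setminus\{x,x^{-1}\}$, and hence the graphs $\overline{\mathcal M}_{\mathbf G}(x)\cap\overline{\mathcal M}_{\mathbf G}(y)$ and the property of being almost connected, are read off from $\mathcal G^\pm(\mathbf G)$ alone. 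Thus $\varphi$ maps good components to good and bad to bad, and on every good component Proposition \ref{komponenta gde vidimo pravac} already makes $\varphi\rvert_C$ a directed isomorphism onto $\varphi(C)$, so nothing more is needed there.

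It remains to identify the group-theoretic nature of a bad component $C$. For non-adjacent $x,y\in C$ the cross intersections $I_{\mathbf G}(x)\cap O_{\mathbf G}(y)$ and $O_{\mathbf G}(x)\cap I_{\mathbf G}(y)$ are empty, since a witness would force $x\in\langle y\rangle$ or $y\in\langle x\rangle$, contradicting non-adjacency; hence $M_{\mathbf G}(x)\cap M_{\mathbf G}(y)=\big(O_{\mathbf G}(x)\cap O_{\mathbf G}(y)\big)\cup\big(I_{\mathbf G}(x)\cap I_{\mathbf G}(y)\big)$. By the discussion preceding Proposition \ref{komponenta gde vidimo pravac}, $\overline{\mathcal O}_{\mathbf G}(x)\cap\overline{\mathcal O}_{\mathbf G}(y)$ is already almost connected; were $I_{\mathbf G}(x)\cap I_{\mathbf G}(y)$ empty, $\overline{\mathcal M}_{\mathbf G}(x)\cap\overline{\mathcal M}_{\mathbf G}(y)$ would coincide with $\overline{\mathcal O}_{\mathbf G}(x)\cap\overline{\mathcal O}_{\mathbf G}(y)$ and the pair would be good. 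So in a bad component every non-adjacent pair has a common ancestor $z$ with $x,y\in\langle z\rangle$, whence $\langle x,y\rangle$ is cyclic; adjacent pairs generate a cyclic group trivially. Since these intermediate generators again lie in $C$, an easy induction shows $\langle C\rangle$ is locally cyclic, hence torsion-free and isomorphic to a subgroup of $\mathbb Q$, with $C=\langle C\rangle\setminus\{e\}$; as badness is $\varphi$-invariant, $\varphi(C)$ is of the same kind.

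The proof is thereby reduced to its genuine core, which I expect to be the main obstacle: showing that for torsion-free locally cyclic groups the $Z^\pm$-power graph determines the directed one, i.e. that subgroups $A,B\le\mathbb Q$ with $\mathcal G^\pm(A)\cong\mathcal G^\pm(B)$ satisfy $\vec{\mathcal G}^\pm(A)\cong\vec{\mathcal G}^\pm(B)$. The natural route is to prove that the $Z^\pm$-power graph recovers the isomorphism type of a subgroup of $\mathbb Q$ (its prime-by-prime heights), after which any group isomorphism $A\cong B$ transports the arcs; the delicate points are the twin ambiguity $x\leftrightarrow x^{-1}$ and, on the positive cone, reconstructing the divisibility partial order from its comparability graph, where a priori a poset is determined only up to transitive reorientation. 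Once this locally cyclic case is settled, gluing $\varphi$ on the good components with the constructed isomorphisms on the bad ones yields $\vec{\mathcal G}^\pm(\mathbf G)\cong\vec{\mathcal G}^\pm(\mathbf H)$. I note that, granting the locally cyclic case, the argument never invokes nilpotency class $2$ and in fact establishes the conclusion for every torsion-free $\mathbf G$, which is precisely the strengthening pursued in this paper.
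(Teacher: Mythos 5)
Your reduction is sound as far as it goes, but it does not prove the theorem: it reproduces, essentially step for step, the reduction that the present paper itself performs (the good/bad component dichotomy of Proposition \ref{komponenta gde vidimo pravac} and Proposition \ref{komponenta gde mozda ne vidimo pravac}, assembled in Theorem \ref{izomorfni podgrafovi sa elementima beskonacnog reda}), whereas the statement you were asked to prove is precisely the black box that this reduction consumes. Note that the paper contains no proof of this theorem at all --- it is imported from \cite{samostalni} --- and the one place it is invoked is exactly your ``bad'' components: there $C\cup\{e\}$ and $\varphi(C)\cup\{e\}$ are torsion-free locally cyclic (hence abelian, so of nilpotency class at most $2$) groups with isomorphic $Z^\pm$-power graphs, and the cited theorem is what supplies the isomorphism of their directed graphs. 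Your proposal ends by declaring this locally cyclic case ``the genuine core'' and ``the main obstacle'' and offering only a sketch, so the actual content of the statement is left unproven; what you have established instead is the (correct, but different) assertion that the class-$2$ theorem of \cite{samostalni} extends to all torsion-free groups, which is the contribution of the present paper, not of the theorem in question.

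Worse, the route you sketch for the core would fail. You propose to show that the $Z^\pm$-power graph of a subgroup of $\mathbb Q$ determines the group up to isomorphism (its prime-by-prime heights) and then transport arcs along a group isomorphism. But even the \emph{directed} power graph does not determine the isomorphism type of a subgroup of $\mathbb Q$: the groups $\mathbb Z[1/2]$ and $\mathbb Z[1/3]$ are non-isomorphic (any homomorphic image of $1\in\mathbb Z[1/2]$ must be divisible by all powers of $2$, forcing it to be $0$ in $\mathbb Z[1/3]$), yet their directed power graphs are isomorphic, because $x\rightarrowpm y$ is governed by the poset of nontrivial cyclic subgroups under inclusion, each subgroup carrying exactly two generators, and these posets are isomorphic via the bijection of primes exchanging $2$ and $3$. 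Hence no invariant of the (directed or undirected) power graph can recover the height data you aim for; the only statement available is the weaker one --- that the undirected graph determines the directed graph up to isomorphism --- and for torsion-free locally cyclic groups that weaker statement is exactly the unproven content of the theorem. Any genuine proof must construct the directed isomorphism without reconstructing the group, which is what \cite{samostalni} does and what your proposal does not supply.
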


\begin{proposition}\label{komponenta gde mozda ne vidimo pravac}
Let $\mathbf G$ and $\mathbf H$ be groups. Let $\varphi: G\rightarrow H$ be an isomorphism from $\mathcal G^\pm(\mathbf G)$ onto $\mathcal G^\pm(\mathbf H)$, and let $C\subseteq G$ induce a connected component of $\mathcal G^\pm(\mathbf G)$ which contains only elements of infinite order. If $\overline{\mathcal M}_{\mathbf G}(x)\cap \overline{\mathcal M}_{\mathbf G}(y)$ is an almost connected graph for no pair of elements $x,y\in G$ such that $x\not\simppm_{\mathbf G} y$, then $\big(\vec{\mathcal G}^\pm(\mathbf G)\big)[C]\cong\big(\vec{\mathcal G}^\pm(\mathbf H)\big)[\varphi(C)]$.
\end{proposition}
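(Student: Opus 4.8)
The plan is to show that the hypothesis forces $C$ to be exactly the set of non-identity elements of a torsion-free locally cyclic subgroup of $\mathbf G$, and then to finish by Theorem \ref{odredjivanje usmerenog kod torziono slobodne klase nilpotentnosti 2}. First I would analyze a single non-adjacent pair $x,y\in C$. By Lemma \ref{sve ciklicne u komponenti se seku}, $\langle x\rangle\cap\langle y\rangle$ is infinite cyclic, so $O_{\mathbf G}(x)\cap O_{\mathbf G}(y)\neq\emptyset$ and, as recorded just before Proposition \ref{komponenta gde vidimo pravac}, $\overline{\mathcal O}_{\mathbf G}(x)\cap\overline{\mathcal O}_{\mathbf G}(y)$ is almost connected. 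Since by hypothesis $\overline{\mathcal M}_{\mathbf G}(x)\cap\overline{\mathcal M}_{\mathbf G}(y)$ is \emph{not} almost connected, the argument used in the proof of Proposition \ref{komponenta gde vidimo pravac} (via Lemma \ref{O je komponenta povezanosti od N}) shows that $M_{\mathbf G}(x)\cap M_{\mathbf G}(y)\neq O_{\mathbf G}(x)\cap O_{\mathbf G}(y)$, i.e. $I_{\mathbf G}(x)\cap M_{\mathbf G}(y)\neq\emptyset$ or $I_{\mathbf G}(y)\cap M_{\mathbf G}(x)\neq\emptyset$. Choosing such a witness, say $w\rightarrowpm x$ with $w$ adjacent to $y$, the alternative $y\rightarrowpm w$ would give $\langle x\rangle\leq\langle w\rangle\leq\langle y\rangle$ and hence $x\simppm y$, contradicting our choice; so $w\rightarrowpm y$, whence $\langle x\rangle,\langle y\rangle\leq\langle w\rangle$ and $\langle x,y\rangle$ is cyclic.

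Consequently every pair of elements of $C$ lies in a common cyclic subgroup (the adjacent case being trivial), and a routine induction upgrades this to arbitrary finite subsets; therefore $\mathbf L:=\langle C\rangle$ is locally cyclic, and in particular abelian. As $\mathbf L$ contains elements of infinite order it must be torsion-free, so it is (isomorphic to) a subgroup of $(\mathbb Q,+)$, and its unique finite-order element $e$ is isolated in $\mathcal G^\pm(\mathbf L)$. A short connectivity check shows $C=L\setminus\{e\}$: for $w\in L\setminus\{e\}$ and any fixed $x\in C$, local cyclicity gives $\langle x,w\rangle=\langle u\rangle$ with $u$ of infinite order, so $w\simppm u\simppm x$ places $w$ in $C$. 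Thus $\mathcal G^\pm(\mathbf L)=\mathcal G^\pm(\mathbf G)[C]\sqcup\{e\}$.

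Next I would transfer this structure to $\mathbf H$. For an element $x$ of infinite order, $M_{\mathbf G}(x)$ is the open neighborhood of $x$ with the unique vertex $x^{-1}$ sharing its closed neighborhood removed, so $\overline{\mathcal M}_{\mathbf G}(x)$ is graph-theoretically determined; since $\varphi$ preserves adjacency and closed neighborhoods, $\varphi(x^{-1})=(\varphi(x))^{-1}$ and $\varphi$ carries $\overline{\mathcal M}_{\mathbf G}(x)\cap\overline{\mathcal M}_{\mathbf G}(y)$ isomorphically onto $\overline{\mathcal M}_{\mathbf H}(\varphi(x))\cap\overline{\mathcal M}_{\mathbf H}(\varphi(y))$. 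Hence the component $\varphi(C)$ again satisfies the hypothesis of this proposition and, by Lemma \ref{konacni se slikaju na konacne}, consists of elements of infinite order, so the previous two paragraphs apply to it: $\mathbf L':=\langle\varphi(C)\rangle$ is torsion-free locally cyclic and $\varphi(C)=L'\setminus\{e\}$.

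Finally, $\varphi\rvert_C$ together with $e\mapsto e$ is an isomorphism $\mathcal G^\pm(\mathbf L)\cong\mathcal G^\pm(\mathbf L')$ (both identities being isolated). Since $\mathbf L$ is abelian, it is a fortiori nilpotent of class at most $2$, so Theorem \ref{odredjivanje usmerenog kod torziono slobodne klase nilpotentnosti 2} yields $\vec{\mathcal G}^\pm(\mathbf L)\cong\vec{\mathcal G}^\pm(\mathbf L')$. Each of these digraphs has exactly one non-singleton connected component, namely $C$ and $\varphi(C)$, so any such isomorphism restricts to the required isomorphism $\big(\vec{\mathcal G}^\pm(\mathbf G)\big)[C]\cong\big(\vec{\mathcal G}^\pm(\mathbf H)\big)[\varphi(C)]$. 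The main obstacle is the extraction in the first paragraph: squeezing out of the purely negative ``not almost connected'' hypothesis the common upper bound $w$ that witnesses joint containment in a cyclic subgroup, and thereby pinning $C$ down as the non-trivial part of a subgroup of $\mathbb Q$; once that identification is in hand, everything else is bookkeeping and the appeal to Theorem \ref{odredjivanje usmerenog kod torziono slobodne klase nilpotentnosti 2}.
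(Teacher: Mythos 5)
Your proposal is correct and takes essentially the same route as the paper's proof: from a non-adjacent pair in $C$ you extract a common upper bound $w\in I_{\mathbf G}(x)\cap I_{\mathbf G}(y)$, conclude that $C\cup\{e\}$ (and likewise $\varphi(C)\cup\{e\}$) is the universe of a torsion-free locally cyclic subgroup, and finish by applying Theorem \ref{odredjivanje usmerenog kod torziono slobodne klase nilpotentnosti 2} to these abelian subgroups. The paper's argument is identical in substance, differing only in bookkeeping (it deduces $I_{\mathbf G}(x)\cap I_{\mathbf G}(y)\neq\emptyset$ directly from the emptiness of $O(x)\cap I(y)$ and $I(x)\cap O(y)$, and works with $C\cup\{e\}$ rather than $\langle C\rangle$).
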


\begin{proof}
Let $D$ denote $\varphi(C)$, which, by Lemma \ref{konacni se slikaju na konacne}, contains only elements of infinite order. Let $x,y\in C$, and suppose that $x\not\simppm_{\mathbf G}y$.  Because $\overline{\mathcal M}_{\mathbf G}(x)\cap \overline{\mathcal M}_{\mathbf G}(y)$ is not almost connected, and because $O(x)\cap I(y)\neq\emptyset$ or $I(x)\cap O(y)\neq\emptyset$ would imply $x\simppm_{\mathbf G}y$, it follows that $I_{\mathbf G}(x)\cap I_{\mathbf G}(y)\neq\emptyset$. Therefore, there is $z\in C$ such that $z\rightarrowpm_{\mathbf G} x$ and $z\rightarrowpm_{\mathbf G} y$, i.e. $x,y\in\langle z\rangle$. It follows that $\langle x,y\rangle\subseteq\langle z\rangle\subseteq C\cup\{e\}$. Also, if $x\simppm_{\mathbf G}y$, then $\langle x,y\rangle=\langle y\rangle\subseteq C\cup\{e\}$ or $\langle x,y\rangle=\langle x\rangle\subseteq C\cup\{e\}$. Thus, $C\cup\{e\}$ is the universe of a locally cyclic torsion-free subgroup $\hat{\mathbf C}$ of the group $\mathbf G$. Similarly, $D\cup \{e\}$ is the universe of a locally cyclic torsion-free subgroup $\hat{\mathbf D}$ of the group $\mathbf H$. It follows that $\mathcal G^\pm(\hat{\mathbf C})\cong\mathcal G^\pm(\hat{\mathbf D})$. Then, by Theorem \ref{odredjivanje usmerenog kod torziono slobodne klase nilpotentnosti 2} and because $\hat{\mathbf C}$ and $\hat{\mathbf D}$ are abelian, we have $\vec{\mathcal G}^\pm(\hat{\mathbf C})\cong\vec{\mathcal G}^\pm(\hat{\mathbf D})$. Therefore, $\big(\vec{\mathcal G}^\pm(\mathbf G)\big)[C]\cong\big(\vec{\mathcal G}^\pm(\mathbf H)\big)[D]$, which finishes our proof.
\end{proof}\\

Now, with Proposition \ref{komponenta gde vidimo pravac} and Proposition \ref{komponenta gde mozda ne vidimo pravac} on our hands, we can prove the main theorem of this subsection.

\begin{theorem}\label{izomorfni podgrafovi sa elementima beskonacnog reda}
Let $\mathbf G$ and $\mathbf H$ be groups whose $Z^\pm$-power graphs have isomorphic infinite-order components. Then the directed $Z^\pm$-power graphs of $\mathbf G$ and $\mathbf H$ have isomorphic infinite-order components too.
\end{theorem}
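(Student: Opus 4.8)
The plan is to assemble Theorem \ref{izomorfni podgrafovi sa elementima beskonacnog reda} from the two propositions by decomposing the infinite-order component of $\mathcal G^\pm(\mathbf G)$ into its connected components and handling each one separately. Let $\varphi$ be the given isomorphism between the infinite-order components of $\mathcal G^\pm(\mathbf G)$ and $\mathcal G^\pm(\mathbf H)$. First I would observe that $\varphi$ induces a bijection between the connected components of the two infinite-order components, since any graph isomorphism sends connected components to connected components. So it suffices to show that, for each connected component $C$ of the infinite-order component of $\mathcal G^\pm(\mathbf G)$, the restriction of $\varphi$ (or some suitably chosen isomorphism on that piece) yields an isomorphism $\big(\vec{\mathcal G}^\pm(\mathbf G)\big)[C]\cong\big(\vec{\mathcal G}^\pm(\mathbf H)\big)[\varphi(C)]$ of the directed structures. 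Taking the disjoint union of these componentwise isomorphisms then gives the desired isomorphism of the full infinite-order components of the directed $Z^\pm$-power graphs.

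Next I would split into the two cases covered by the propositions, keyed on a property intrinsic to the component $C$. For a fixed component $C$, either there exist $x,y\in C$ with $x\not\simppm_{\mathbf G}y$ such that $\overline{\mathcal M}_{\mathbf G}(x)\cap\overline{\mathcal M}_{\mathbf G}(y)$ is almost connected, or no such pair exists. In the first case, Proposition \ref{komponenta gde vidimo pravac} applies directly and tells us that $\varphi\rvert_C$ itself is an isomorphism of the directed subgraphs. In the second case, Proposition \ref{komponenta gde mozda ne vidimo pravac} applies and gives an abstract isomorphism $\big(\vec{\mathcal G}^\pm(\mathbf G)\big)[C]\cong\big(\vec{\mathcal G}^\pm(\mathbf H)\big)[\varphi(C)]$ (not necessarily equal to $\varphi\rvert_C$, but that does not matter for the statement). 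In both cases we obtain the required componentwise directed isomorphism onto $\big(\vec{\mathcal G}^\pm(\mathbf H)\big)[\varphi(C)]$.

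One subtlety to address carefully is that the hypotheses of the two propositions quantify the almost-connectedness condition slightly differently: Proposition \ref{komponenta gde vidimo pravac} asks for a witnessing pair $x,y\in C$, while Proposition \ref{komponenta gde mozda ne vidimo pravac} requires that no such pair exists for $x,y\in G$ at all. I would reconcile these by noting that, by Lemma \ref{sve ciklicne u komponenti se seku} and the structural discussion preceding Proposition \ref{komponenta gde vidimo pravac}, the relevant intersections $O(x)\cap O(y)$ are nonempty only when $\langle x\rangle$ and $\langle y\rangle$ share a nontrivial subgroup, which forces $x$ and $y$ into the same connected component; hence any witnessing pair must in fact lie inside a single component, and restricting the quantifier to $C$ versus $G$ makes no difference for the dichotomy applied to that component. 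I expect this alignment of the two quantifier conventions to be the main point requiring care, but it is essentially bookkeeping once the componentwise reduction is in place.

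Finally, I would collect the componentwise isomorphisms into a global one. Since distinct connected components of the infinite-order component are vertex-disjoint and have no edges between them (in either the undirected or the directed graph), and since $\varphi$ permutes them bijectively onto the components of $\mathbf H$, the disjoint union of the per-component directed isomorphisms is a well-defined bijection $G_\infty\to H_\infty$ that preserves and reflects the arc relation $\rightarrowpm$. This is exactly an isomorphism between the infinite-order components of $\vec{\mathcal G}^\pm(\mathbf G)$ and $\vec{\mathcal G}^\pm(\mathbf H)$, completing the proof.
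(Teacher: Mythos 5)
Your proposal is correct and takes essentially the same route as the paper's own proof: the paper likewise fixes a connected component $C$ of the infinite-order component, applies Proposition \ref{komponenta gde vidimo pravac} when some non-adjacent pair $x,y\in C$ has $\overline{\mathcal M}_{\mathbf G}(x)\cap\overline{\mathcal M}_{\mathbf G}(y)$ almost connected and Proposition \ref{komponenta gde mozda ne vidimo pravac} otherwise, and then assembles the componentwise directed isomorphisms into a global one. One remark on the quantifier subtlety you flag: your resolution (witnessing pairs are intra-component) does not by itself exclude the case where $C$ has no witnessing pair while some other component does, in which case the literal hypothesis of Proposition \ref{komponenta gde mozda ne vidimo pravac} fails; the cleaner fix, implicit in the paper's usage, is that the proof of that proposition invokes its hypothesis only for pairs inside $C$, so it remains valid with the quantifier restricted to $C$.
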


\begin{proof}
 Let $C$ be a connected component of the infinite-order component of $\mathcal G^\pm(\mathbf G)$. If  there are some elements $x,y\in C$ non-adjacent in $\mathcal G^\pm(\mathbf G)$ for which $\overline{\mathcal M}_{\mathbf G}(x)\cap \overline{\mathcal M}_{\mathbf G}(y)$ is almost connected, then, by Proposition \ref{komponenta gde vidimo pravac}, it follows that $\big(\vec{\mathcal G}^\pm(\mathbf G)\big)[C]\cong\big(\vec{\mathcal G}^\pm(\mathbf G)\big)[\varphi(C)]$. If $\overline{\mathcal M}_{\mathbf G}(x)\cap \overline{\mathcal M}_{\mathbf G}(y)$ is almost connected for no pair of different elements $x$ and $y$ from $C$ such that $x\not\simppm_{\mathbf G}y$, then, by Proposition \ref{komponenta gde mozda ne vidimo pravac}, we get $\big(\vec{\mathcal G}^\pm(\mathbf G)\big)[C]\cong\big(\vec{\mathcal G}^\pm(\mathbf G)\big)[\varphi(C)]$. From this follows that the directed $Z^\pm$-power graphs of groups $\mathbf G$ and $\mathbf H$ have isomorphic infinite-order components.
\end{proof}

\begin{corollary}
Let $\mathbf G$ be a torsion-free group, and $\mathbf H$ be a group such that $\mathcal G^\pm(\mathbf G)\cong\mathcal G^\pm(\mathbf H)$. Then $\vec{\mathcal G}^\pm(\mathbf G)\cong\vec{\mathcal G}^\pm(\mathbf H)$.
\end{corollary}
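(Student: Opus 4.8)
The plan is to deduce this directly from Theorem \ref{izomorfni podgrafovi sa elementima beskonacnog reda}, so that the only real work is reducing the whole graph to its infinite-order component. First I would fix an isomorphism $\varphi\colon G\to H$ from $\mathcal G^\pm(\mathbf G)$ onto $\mathcal G^\pm(\mathbf H)$. Since $\mathbf G$ is torsion-free, the only element of finite order is the identity, so $G_{<\infty}=\{e\}$ and $G_\infty=G\setminus\{e\}$; moreover $e$ is an isolated vertex of $\mathcal G^\pm(\mathbf G)$, because $y^n=e$ with $n\neq 0$ forces $y=e$ in a torsion-free group, and thus $e$ is adjacent to no non-identity element.

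Next I would pin down the structure of $\mathbf H$. By Lemma \ref{konacni se slikaju na konacne}, $\varphi(G_{<\infty})=H_{<\infty}$; since $\varphi$ is a bijection and $G_{<\infty}$ is a singleton, so is $H_{<\infty}$, and because the identity of $\mathbf H$ always lies in $H_{<\infty}$ we get $H_{<\infty}=\{e\}$, i.e. $\mathbf H$ is torsion-free as well. Consequently $\varphi$ carries $G_\infty$ bijectively onto $H_\infty$, and the restriction $\varphi\rvert_{G_\infty}$ is an isomorphism from the infinite-order component of $\mathcal G^\pm(\mathbf G)$ onto that of $\mathcal G^\pm(\mathbf H)$. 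In particular, the two $Z^\pm$-power graphs have isomorphic infinite-order components, which is precisely the hypothesis of Theorem \ref{izomorfni podgrafovi sa elementima beskonacnog reda}.

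Finally, applying Theorem \ref{izomorfni podgrafovi sa elementima beskonacnog reda} yields an isomorphism $\psi$ between the infinite-order components of $\vec{\mathcal G}^\pm(\mathbf G)$ and $\vec{\mathcal G}^\pm(\mathbf H)$. Since both groups are torsion-free, each directed $Z^\pm$-power graph is the disjoint union of its infinite-order component with the single isolated vertex $e$; extending $\psi$ by $e\mapsto e$ therefore gives the desired isomorphism $\vec{\mathcal G}^\pm(\mathbf G)\cong\vec{\mathcal G}^\pm(\mathbf H)$. I do not expect any genuine obstacle here, as all the substance is already contained in Theorem \ref{izomorfni podgrafovi sa elementima beskonacnog reda}; the only points demanding care are observing that torsion-freeness of $\mathbf G$ propagates to $\mathbf H$ through Lemma \ref{konacni se slikaju na konacne}, and that the lone finite-order vertex can be reattached trivially.
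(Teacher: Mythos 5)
Your proposal is correct and follows essentially the same route as the paper: both use Lemma \ref{konacni se slikaju na konacne} to see that the finite-order components are isomorphic singletons (so $\mathbf H$ is also torsion-free), then invoke Theorem \ref{izomorfni podgrafovi sa elementima beskonacnog reda} for the infinite-order components, and reattach the isolated identity vertex. Your write-up is merely a bit more explicit than the paper's (e.g.\ in checking that $e$ is isolated in $\mathcal G^\pm(\mathbf G)$), but the argument is the same.
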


\begin{proof}
By Lemma \ref{konacni se slikaju na konacne},  $\mathcal G^\pm(\mathbf G)$ and $\mathcal G^\pm(\mathbf H)$ have isomorphic infinite-order components and isomorphic finite-order components. Because $\mathbf G$ is torsion-free, the finite-order component of $\mathcal G^\pm(\mathbf G)$ has only one vertex, and so the same holds for $\mathcal G^\pm(\mathbf H)$. Therefore, to prove that $\vec{\mathcal G}^\pm(\mathbf G)\cong\vec{\mathcal G}^\pm(\mathbf H)$, it is sufficient to show that $\vec{\mathcal G}^\pm(\mathbf G)$ and $\vec{\mathcal G}^\pm(\mathbf H)$ have isomorphic infinite-order components. But, by Theorem \ref{izomorfni podgrafovi sa elementima beskonacnog reda}, $\vec{\mathcal G}^\pm(\mathbf G)$ and $\vec{\mathcal G}^\pm(\mathbf H)$ do have isomorphic infinite-order components. Thus, the corollary has been proved.
\end{proof}\\

Now, the subsequent statement follows directly by Theorem \ref{medjusobno odredjivanje opsteg i nenula-stepenog grafa}.

\begin{corollary}
Let $\mathbf G$ be a torsion-free group, and $\mathbf H$ be a group such that $\mathcal G(\mathbf G)\cong\mathcal G(\mathbf H)$. Then $\vec{\mathcal G}(\mathbf G)\cong\vec{\mathcal G}(\mathbf H)$.
\end{corollary}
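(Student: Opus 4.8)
The plan is to reduce this statement about power graphs to the already-established statement about $Z^\pm$-power graphs, and then to absorb the single feature that distinguishes the directed power graph from the directed $Z^\pm$-power graph. First I would invoke Theorem \ref{medjusobno odredjivanje opsteg i nenula-stepenog grafa}: from $\mathcal G(\mathbf G)\cong\mathcal G(\mathbf H)$ it follows that $\mathcal G^\pm(\mathbf G)\cong\mathcal G^\pm(\mathbf H)$. Since $\mathbf G$ is torsion-free, the preceding corollary applies and yields an isomorphism $\psi:\vec{\mathcal G}^\pm(\mathbf G)\to\vec{\mathcal G}^\pm(\mathbf H)$ of directed $Z^\pm$-power graphs. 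The remaining work is to promote $\psi$ to an isomorphism of the full directed power graphs.

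Next I would compare the two directed constructions on a torsion-free group $\mathbf K$. The arcs $x\to y$ of $\vec{\mathcal G}(\mathbf K)$ with $y=x^n$ for some $n\in\mathbb Z$ but for no $n\in\mathbb Z\setminus\{0\}$ are exactly those with $y=e$ (arising from $n=0$), because $x^n=e$ forces $n=0$ once $x\neq e$. Hence, for any torsion-free group $\mathbf K$, the digraph $\vec{\mathcal G}(\mathbf K)$ is obtained from $\vec{\mathcal G}^\pm(\mathbf K)$ by adding precisely the arcs $x\to e$ for every $x\in K\setminus\{e\}$, and no other arcs differ. Applying this to both $\mathbf G$ and $\mathbf H$ (the latter is torsion-free as well, as noted below), the whole discrepancy between the directed and the directed $Z^\pm$ graphs is the fan of arcs into the identity.

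The one point requiring care is that $\psi$ already respects the identity. Since $\psi$ is in particular an isomorphism of the underlying $Z^\pm$-power graphs, Lemma \ref{konacni se slikaju na konacne} gives $\psi(G_{<\infty})=H_{<\infty}$. Because $\mathbf G$ is torsion-free, $G_{<\infty}=\{e_{\mathbf G}\}$ is a single vertex, so $H_{<\infty}=\{e_{\mathbf H}\}$ is a single vertex too, whence $\mathbf H$ is torsion-free and $\psi(e_{\mathbf G})=e_{\mathbf H}$. Consequently $\psi$ carries the added arc set $\{(x,e_{\mathbf G}):x\in G\setminus\{e_{\mathbf G}\}\}$ bijectively onto $\{(y,e_{\mathbf H}):y\in H\setminus\{e_{\mathbf H}\}\}$, and therefore the same bijection $\psi$ is an isomorphism $\vec{\mathcal G}(\mathbf G)\to\vec{\mathcal G}(\mathbf H)$, as required.

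I expect no serious obstacle: all the genuine difficulty is carried by the preceding corollary (through Theorem \ref{izomorfni podgrafovi sa elementima beskonacnog reda}) and by Theorem \ref{medjusobno odredjivanje opsteg i nenula-stepenog grafa}. The only step deserving explicit verification is that $\psi$ fixes the identity, so that the extra arcs into $e$ correspond on the two sides; everything else is bookkeeping.
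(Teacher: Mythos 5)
Your proposal is correct and follows essentially the same route as the paper: reduce to the $Z^\pm$-setting via Theorem \ref{medjusobno odredjivanje opsteg i nenula-stepenog grafa}, invoke the preceding corollary on directed $Z^\pm$-power graphs, and translate back to directed power graphs. The only difference is one of explicitness: you carefully verify the last translation step (that the isomorphism $\psi$ must fix the identity, since $G_{<\infty}=\{e\}$ forces $H_{<\infty}=\{\psi(e)\}=\{e\}$, and hence carries the extra fan of arcs into $e$ correctly), whereas the paper treats this as following directly.
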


\subsection{Isomorphism between Finite-Order Components}

In this subsection, we give the proof that, if two groups have isomorphic $Z^\pm$-power graphs, and if at least one of them does not contain any intersection-free quasicyclic subgroup, then the finite-order components of their directed $Z^\pm$-power graphs are also isomorphic. Proofs from this subsection rely on the ideas presented in \cite{power graph 2} by Peter Cameron, where he showed that the power graph of a finite group $\mathbf G$ determines the directed power graph. There he noticed that it is possible to determine the directions of arcs between vertices from different $\equiv_{\mathbf G}$-classes. He also observed that, although it may be impossible to determine directions of all arcs within a single $\equiv_{\mathbf G}$-class, it is possible to determine the induced subgraph of $\vec{\mathcal G}^\pm(\mathbf G)$ by that $\equiv_{\mathbf G}$-class up to isomorphism. The difference here is that the set of all elements of finite order of a group may not be finite, and it may not even be a universe of a subgroup of the group.

The following proposition is a generalization of \cite[Proposition 4]{power graph 2}.  Notice that the finite-order component of the $Z^\pm$-power graph of a group has at least one vertex adjacent to all other vertices. Therefore, for a group $\mathbf G$, the set of all vertices of the finite-order component $\Phi$ of $\mathcal G^\pm(\mathbf G)$ adjacent to all other vertices of $\Phi$ we shall call the {\bf center} of $\Phi$, and we will denote it by $\Cen(\Phi)$. More formally, $\Cen(\Phi)=\{x\in G_{<\infty}\mid x\simppm_{\mathbf G} y\text{ for all } y\in G_{<\infty}\}$. As a result of Proposition \ref{kameronova propozicija}, for a group $\mathbf G$, we will be able to prove more easily that the finite-order component of $\mathcal G^\pm(\mathbf G)$ determines the finite-order component of $\vec{\mathcal G}^\pm(\mathbf G)$ when the center of the finite-order component of $\mathcal G^\pm(\mathbf G)$ contains more than one element, while, in this subsection, we will mostly deal with the case when the center of $\mathcal G^\pm(\mathbf G)$ is trivial.

\begin{proposition}\label{kameronova propozicija}
Let $\mathbf G$ be a group such that $\lvert\Cen(\Phi)\rvert>1$, where $\Phi$ is the finite-order component of $\mathcal G^\pm(\mathbf G)$, and let $S=\Cen(\Phi)$. Then one of the following holds:
\begin{enumerate}
\item $\mathbf G_{<\infty}$ is a Pr\" ufer group. In this case, $S=G_{<\infty}$ and $S$ is infinite. 
\item $\mathbf G_{<\infty}$ is a cyclic group of prime power order. In this case, $S=G_{<\infty}$ and $S$ is finite.
\item $\mathbf G_{<\infty}$ is a cyclic group whose order is the product of two different prime numbers. In this case, $\lvert S\rvert\geq \frac 12\lvert G_{<\infty}\rvert$ and the set $G_{<\infty}\setminus S$ induces a disconnected subgraph of $\Phi$.
\item $\mathbf G_{<\infty}$ is a cyclic group whose order is divisible by at least two different prime numbers, but whose order is not the product of two different prime numbers. In this case, the set $G_{<\infty}\setminus S$ induces a connected subgraph of $\Phi$.
\item There is a prime number $p$ such that the order of every element from $G_{<\infty}$ is a power of $p$, but $\langle G_{<\infty}\rangle$ is not a cyclic, nor a Pr\" ufer group. In this case, $\lvert S\rvert<\frac 12\lvert G_{<\infty}\rvert$ and the set $G_{<\infty}\setminus S$ induces a disconnected subgraph of $\Phi$.
\end{enumerate}
\end{proposition}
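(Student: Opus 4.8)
The plan is to fix a non-identity element $g\in S=\Cen(\Phi)$ and recover the isomorphism type of $\mathbf G_{<\infty}$ (or of $\langle G_{<\infty}\rangle$) from the divisibility data of $o(g)$. The only tool I would use throughout is the elementary reformulation of adjacency: for finite-order $x,y$ one has $x\simppm_{\mathbf G}y$ iff $x\in\langle y\rangle$ or $y\in\langle x\rangle$, so membership of $g$ in $S$ says precisely that every $y\in G_{<\infty}$ satisfies $y\in\langle g\rangle$ or $g\in\langle y\rangle$. I would then split on whether $o(g)$ is a prime power.

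Suppose first that $o(g)$ is divisible by two distinct primes. If $y$ has prime-power order then $g\in\langle y\rangle$ is impossible (it would force $o(g)\mid o(y)$), so $y\in\langle g\rangle$; thus every prime-power-order element lies in $\langle g\rangle$. For a general $y$ with $g\in\langle y\rangle$, each Sylow generator of the cyclic group $\langle y\rangle$ has prime-power order, hence lies in $\langle g\rangle$, which forces $o(y)\mid o(g)$ and, together with $\langle g\rangle\subseteq\langle y\rangle$, gives $\langle y\rangle=\langle g\rangle$. Hence $G_{<\infty}=\langle g\rangle$ is a finite cyclic group whose order $n$ has at least two prime divisors. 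In this cyclic case a divisor $d$ of $n$ is comparable (under divisibility) to every divisor of $n$ iff $d\in\{1,n\}$, so $S$ is exactly the identity together with the generators; and since elements of equal order are mutually adjacent while elements of orders $d,d'$ are joined iff $d\mid d'$ or $d'\mid d$, the graph $\Phi[G_{<\infty}\setminus S]$ is the comparability graph on the proper nontrivial divisors of $n$. I would finish cases $3$ and $4$ by analysing that graph: if $n=pq$ the only such divisors are $p$ and $q$, which are incomparable, so we get two cliques (disconnected), and the bound $|S|=1+(p-1)(q-1)\ge\frac12 pq$ reduces to $(p-2)(q-2)\ge0$; otherwise, for each pair of prime divisors $p_i,p_j$ of $n$ the connector $p_ip_j$ is a proper divisor joining $p_i$ and $p_j$, and every proper nontrivial divisor is comparable to one of its prime factors, so the comparability graph is connected.

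If instead $o(g)=p^k$ is a prime power, then no element of $G_{<\infty}$ can have order divisible by a prime $q\ne p$ (it would contain an order-$q$ element $b$ with $b\not\simppm g$), so every element has $p$-power order. Here I would first note that $\langle g\rangle$ contains the unique subgroup of order $p$, since any order-$p$ element outside $\langle g\rangle$ is non-adjacent to $g$; writing $\langle a\rangle$ for this unique minimal subgroup, every nontrivial cyclic subgroup contains $\langle a\rangle$, so all of $\langle a\rangle$ lies in $S$. A short closure argument shows $S$ is a subgroup whose cyclic subgroups form a chain, hence $S$ is cyclic of $p$-power order or quasicyclic. If the $Z^\pm$-power graph on $G_{<\infty}$ is complete, all cyclic subgroups are pairwise comparable, so $\langle G_{<\infty}\rangle$ is locally cyclic with its cyclic subgroups totally ordered; this makes $G_{<\infty}$ a subgroup, cyclic of prime power order (case $2$, $S=G_{<\infty}$ finite) or quasicyclic (case $1$, $S=G_{<\infty}$ infinite).

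The remaining possibility is case $5$, where $\langle G_{<\infty}\rangle$ is neither cyclic nor quasicyclic, and this is where the real work lies. Since an abelian $p$-group with a unique subgroup of order $p$ is cyclic or quasicyclic, $\langle G_{<\infty}\rangle$ must be non-abelian, and I would expect the structure theory of $p$-groups with a unique subgroup of order $p$ to pin it down to a (possibly infinite) generalized quaternion group. In this regime $S=\langle s\rangle$ is finite cyclic and every $y\notin S$ has $\langle y\rangle\supsetneq S$, so the size bound follows from $|G_{<\infty}\setminus S|\ge|\langle y\rangle\setminus S|=o(y)-|S|\ge(p-1)|S|\ge|S|$, with equality only if $G_{<\infty}=\langle y\rangle$ is cyclic, which is excluded; hence $|S|<\frac12|G_{<\infty}|$. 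For disconnectedness I would show that the maximal cyclic subgroups properly containing $S$ meet pairwise exactly in $S$ and that there are at least two of them, so the components of $\Phi[G_{<\infty}\setminus S]$ are precisely the sets $M\setminus S$. The main obstacle is exactly this last step: establishing that $S$ is cyclic and that distinct maximal cyclic subgroups intersect in $S$ rather than in a larger subgroup that would reconnect them, and doing so uniformly for possibly infinite, non-abelian torsion groups instead of only for finite generalized quaternion groups.
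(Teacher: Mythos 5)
Your handling of cases 1--4 is correct and is essentially the paper's own argument in different packaging: like the paper, you reduce the multi-prime case to $G_{<\infty}=\langle g\rangle$ finite cyclic (the paper does this via the exponent of $G_{<\infty}$, you via Sylow generators of $\langle y\rangle$ --- both work), you identify $S$ as the identity plus the generators, and your divisor-comparability analysis reproduces the paper's computation $\lvert S\rvert=(p-1)(q-1)+1\geq\frac{pq}{2}$ and its connectivity claim. Your cardinality bound in case 5 (every $y\notin S$ satisfies $S\subsetneq\langle y\rangle$, hence $\lvert S\rvert<\frac12\lvert G_{<\infty}\rvert$) is also correct and is a clean substitute for the paper's counting; note too that ``$S$ is finite cyclic in case 5'' is not really part of the obstacle, since a quasicyclic $S$ would force $S\subseteq\langle w\rangle$ for every $w\notin S$, which is impossible for finite $\langle w\rangle$, so $G_{<\infty}=S$ would be Pr\"ufer.

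The genuine gap --- which you flag yourself --- is the disconnectedness of $\Phi[G_{<\infty}\setminus S]$ in case 5, and the route you propose would fail. Maximal cyclic subgroups need not exist: take $\mathbf G=\mathbf Q_{2^\infty}$, the extension of $\mathbf C_{2^\infty}$ by an element $j$ with $j^2=-1$ and $jxj^{-1}=x^{-1}$. All hypotheses of case 5 hold and $S=\{1,-1\}$, but the elements of $C_{2^\infty}\setminus S$ are pairwise adjacent and form a single connected component which is \emph{not} of the form $M\setminus S$ for any cyclic subgroup $M$: inside $\mathbf C_{2^\infty}$ every cyclic subgroup is properly contained in a larger one, so Zorn's lemma gives nothing (a union of a chain of cyclic subgroups need not be cyclic). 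Moreover, an appeal to the classification of groups with a unique subgroup of order $p$ would require knowing that $\langle G_{<\infty}\rangle$ is a locally finite $p$-group, which you have not established (a priori $\langle G_{<\infty}\rangle$ could even contain elements of infinite order). The paper's proof needs none of this. Write $S=\langle x\rangle$ with $o(x)=p^k$ maximal in $S$ (such an $x$ exists, since otherwise $S$ is quasicyclic and, as above, $G_{<\infty}=S$ is Pr\"ufer, excluded). There exist $y,z$ of order $p^{k+1}$ with $y\not\approx z$: indeed, if all elements of order $p^{k+1}$ generated a single subgroup $T$, then every $w\notin S$ would satisfy $T\subseteq\langle w\rangle$, so a generator of $T$ would lie in $S$, contradicting maximality of $p^k$. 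Now the key local observation: every $w\notin S$ has $S\subsetneq\langle w\rangle$, hence $\langle w\rangle$ contains a unique subgroup of order $p^{k+1}$; and if $u\simppm w$ with $u,w\notin S$, then $\langle u\rangle$ and $\langle w\rangle$ are comparable, so they contain the \emph{same} subgroup of order $p^{k+1}$. This subgroup is therefore constant along paths in $\Phi[G_{<\infty}\setminus S]$, so $y$ and $z$ lie in different components --- disconnectedness follows with no maximal cyclic subgroups and no structure theory. Replacing your last step by this invariant completes your proof.
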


\begin{proof}
Let $\mathcal P$ be the set of all prime numbers $p$ such that $G_{<\infty}$ contains an element of order $p$. In this proof, by the exponent of a subset $X$ of $G$ we mean the least $k\in\mathbb N$ such that $x^k=e$ for all $x\in X$.

Suppose first that the set $\mathcal P$ contains only one prime number. If $\mathbf G_{<\infty}$ is a cyclic group, then $S=G_{<\infty}$, and $S$ is finite. If $\mathbf G_{<\infty}$ is a Pr\" ufer group, then $S=G_{<\infty}$ and $S$ is infinite. Suppose further that $G_{<\infty}$ is not the universe of a cyclic subgroup of $\mathbf G$, nor it is the universe of a subgroup of $\mathbf G$ isomorphic to a Pr\" ufer group. Then $\langle G_{<\infty}\rangle$ is not a cyclic nor a Pr\" ufer group, because $G_{<\infty}$ already contains all elements of finite order of $\mathbf G$. Let us show that there is an element of $S$ of maximal order. Let $x\in S$ and $y\in G_{<\infty}\setminus S$. Then $y\rightarrowpm x$, because $\langle x\rangle\subseteq S$. Now, if $S$ had no element of maximal order, then there would be no element $y$ such that $y\rightarrowpm x$ for all $x\in S$, which, by the above discussion, would imply that $S=G_{<\infty}$. This is a contradiction with the fact that $\langle G_{<\infty}\rangle$ is not isomorphic to a Pr\" ufer group. Thus, there is an element $x\in S$ of maximal order, and $\langle x\rangle=S$. Let $o(x)=p^k$ for some $k\in\mathbb N$. Then there are $y,z\in G_{<\infty}\setminus S$ of order $p^{k+1}$ such that $y\not\approx z$. Then $\overline N_{\Phi[G_{<\infty}\setminus S]}(y)$ and $\overline N_{\Phi[G_{<\infty}\setminus S]}(z)$ are different connected components of $\Phi[G_{<\infty}\setminus S]$, and their cardinalities are at least $(p-1)p^k$. Therefore, $\lvert S\rvert<\frac 12\lvert G_{<\infty}\rvert$.

Suppose now that $\lvert\mathcal P\rvert>1$. Let $x\in S\setminus\{e\}$. For every $i\leq n$, $o(x)$ is divisible by $p$, because otherwise $x$ would not be adjacent to any element of order $p$. Therefore, $\mathcal P$ is finite, and the exponent of $G_{<\infty}$ is $p_1^{k_1}p_2^{k_2}\cdots p_m^{k_m}$ for some $m>1$ and for some prime numbers $p_1,p_2,\dots,p_m$. Moreover, for any $i\leq m$, there is an element $y_i\in G_{<\infty}$ of order $p_i^{k_i}$. Because $x\rightarrowpm y_i$, then $p_i^{k_i}\mid o(x)$. Thus, $o(x)$ is equal to the exponent of $G_{<\infty}$, and $\mathbf G_{<\infty}=\langle x\rangle$ because $x\in S\setminus\{e\}$. Now, if $o(x)$ is not a product of two different prime numbers, then the graph $\Phi[G_{<\infty}\setminus S]$ is connected. However, if $o(x)=pq$, for different prime numbers $p$ and $q$, then $\Phi[G_{<\infty}\setminus S]$ is disconnected and
\begin{align*}
\lvert S\rvert&=(p-1)(q-1)+1=\frac{pq+pq-2p-2q+4}2\\
&=\frac{pq}2+\frac{(p-2)(q-2)}2\geq\frac {pq}2=\frac{\lvert G\rvert}2.
\end{align*}
This proves the proposition.
\end{proof}\\

Let us show now that, if we knew all $\approx$-classes and their relations in the $Z^\pm$-power graph, then, for any two adjacent $\approx$-classes which contain elements of finite order, it would be possible to  determine which one of them contains elements of greater order. The following fact was used in \cite{power graph 2}, although it was not given as a separate lemma there.

\begin{lemma}\label{odredjivanje pravaca}
Let $\mathbf G$ be a group, and let $x,y\in G_{<\infty}$, $x\neq y$. Then $x\rightarrowpm y$ if and only if at least one of the following holds:
\begin{enumerate}
\item $x\simppm y$ and $\big\lvert[y]_\approx\big\rvert<\big\lvert[x]_\approx\big\rvert$;
\item $x\simppm y$, $\big\lvert[y]_\approx\big\rvert=\big\lvert[x]_\approx\big\rvert$, and $x\simppm z$ for some $z\in G_{<\infty}$ such that $[z]_\approx=\{z\}$, and $\overline N(z)\neq G$;
\item $x\approx y$.
\end{enumerate}
\end{lemma}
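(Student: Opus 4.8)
The plan is to translate every hypothesis into a statement about the orders of $x$ and $y$ and the containment of the cyclic groups they generate, and then reduce the lemma to an elementary fact about Euler's totient function $\phi$. Since $x,y\in G_{<\infty}$, write $a=o(x)$ and $b=o(y)$. The three dictionary entries I would record first are: (i) $\big\lvert[x]_\approx\big\rvert=\phi(a)$, because the $\approx$-class of $x$ is exactly the set of generators of $\langle x\rangle$; (ii) $x\rightarrowpm y$ if and only if $y\in\langle x\rangle$, i.e. $\langle y\rangle\leq\langle x\rangle$; and (iii) $x\simppm y$ if and only if $\langle x\rangle$ and $\langle y\rangle$ are comparable, since $y=x^n$ forces $\langle y\rangle\leq\langle x\rangle$ and symmetrically. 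The number-theoretic input I would isolate is: if $b\mid a$ then $\phi(b)\mid\phi(a)$, and moreover $\phi(b)=\phi(a)$ with $b\mid a$ holds exactly when $b=a$ or when $a=2b$ with $b$ odd. Both statements follow at once from multiplicativity of $\phi$ and the formula $\phi(p^k)=p^{k-1}(p-1)$.

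For the backward direction I would treat the three conditions in increasing difficulty. Condition (3) is immediate: $x\approx y$ means $\langle x\rangle=\langle y\rangle$, so $y\in\langle x\rangle$ and hence $x\rightarrowpm y$. For condition (1), comparability gives $\langle y\rangle\leq\langle x\rangle$ or $\langle x\rangle\leq\langle y\rangle$; the latter would force $a\mid b$, whence $\phi(a)\mid\phi(b)$ and $\phi(a)\leq\phi(b)$, contradicting $\big\lvert[y]_\approx\big\rvert<\big\lvert[x]_\approx\big\rvert$. Thus $\langle y\rangle\leq\langle x\rangle$ and $x\rightarrowpm y$. Condition (2) is the heart of the matter. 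Here $\big\lvert[z]_\approx\big\rvert=1$ means $\phi(o(z))=1$, so $o(z)\in\{1,2\}$; the clause $\overline N(z)\neq G$ is what rules out $z=e$ and promotes $z$ to a genuine involution. From $x\simppm z$ with $o(z)=2$ one gets $z\in\langle x\rangle$ (or $x=z$), so $a$ is even. Combining with $\phi(a)=\phi(b)$ and comparability of $\langle x\rangle,\langle y\rangle$: either $a=b$, giving $\langle x\rangle=\langle y\rangle$ and $x\rightarrowpm y$, or $\{a,b\}=\{m,2m\}$ with $m$ odd, and since $a$ is even we must have $a=2m\geq b$, so again $\langle y\rangle\leq\langle x\rangle$ and $x\rightarrowpm y$.

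For the forward direction I would assume $x\rightarrowpm y$, so $\langle y\rangle\leq\langle x\rangle$, giving $x\simppm y$ and $b\mid a$. If $b=a$ then $\langle x\rangle=\langle y\rangle$, i.e. $x\approx y$, which is condition (3). If $b<a$, then $\phi(b)\mid\phi(a)$; when the division is strict we land in condition (1), and when $\phi(b)=\phi(a)$ the totient fact forces $a=2b$ with $b$ odd. In this last situation $x$ has even order, so $\langle x\rangle$ contains a unique involution $z=x^{b}$, which is a singleton $\approx$-class adjacent to $x$; and $\overline N(z)\neq G$ is witnessed by $y$ itself, since $y$ has odd order and is therefore not adjacent to the involution $z$. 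Hence condition (2) holds, which completes the equivalence.

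I expect the main obstacle to be precisely the equal-totient case, condition (2): one must use the exact classification of pairs $b\mid a$ with $\phi(b)=\phi(a)$, recognise that an involution lying in $\langle x\rangle$ certifies that $x$ carries the larger (even) order, and justify that the purely graph-theoretic clause $\overline N(z)\neq G$ is the correct proxy for ``$z$ is a nontrivial involution''. The delicate point is the behaviour of the identity, which carries no order information: I would verify carefully that the condition $\overline N(z)\neq G$ indeed excludes $z=e$ in the relevant setting (where $\overline N(e)$ already exhausts every vertex $z=e$ could serve against), so that the criterion breaks the symmetry correctly and holds for the pair $(x,y)$ but not for $(y,x)$ when $a=2b$.
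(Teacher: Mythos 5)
Your route is the same as the paper's: both arguments rest on the two totient facts (for $b\mid a$ one has $\varphi(b)\mid\varphi(a)$, with equality only when $a=b$ or $a=2b$ with $b$ odd) and both break the tie in the exceptional case $a=2b$, $b$ odd, by observing that the element of even order is adjacent to an involution while the element of odd order is not, an involution being recognisable as a non-identity singleton $\approx$-class. In the generic situation --- $x,y\neq e$, with all neighbourhoods taken inside $G_{<\infty}$ --- your argument is complete and considerably more careful than the paper's one-paragraph sketch.

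However, two of your concrete steps fail, precisely at the identity edge cases you yourself flagged as delicate, and they cannot be repaired because the statement itself fails there. (a) Forward direction, $b=1$: if $y=e$ and $o(x)=2$, your witness $z=x^{b}$ is $x$ itself, hence not adjacent to $x$, and your certificate for $\overline N(z)\neq G$ (``$y$ has odd order, so $y\not\simppm z$'') also collapses, since $e$ is adjacent to every involution. Indeed, in $\mathbf G=\mathbb Z_4$ with $x$ the element of order $2$ and $y=e$ one has $x\rightarrowpm y$ while all three conditions fail. (b) Backward direction: your claim that $\overline N(z)\neq G$ rules out $z=e$ presupposes $\overline N(e)=G$, which holds only for torsion groups, whereas the lemma is stated for an arbitrary group. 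In $\mathbf G=\mathbb Z_6\times\mathbb Z$, with $y=(1,0)$ and $x=y^2$, we have $\lvert[x]_\approx\rvert=\lvert[y]_\approx\rvert=2$ and $z=e$ satisfies $[e]_\approx=\{e\}$ and $\overline N(e)=G_{<\infty}\neq G$, so condition (2) holds for the pair $(x,y)$ although $x\not\rightarrowpm y$. (Similarly, taking $x=e$ in the Klein four-group gives a false positive for condition (2); there your inference ``$x\simppm z$ and $o(z)=2$, hence $a$ is even'' breaks down, because the case $x\in\langle z\rangle$, i.e. $x=e$, is the one that occurs.) To be fair, these defects are inherited from the lemma and are equally present in the paper's own terse proof, which is silent on them; both the statement and your argument become correct exactly under the reading the paper actually uses in Theorem \ref{neusmereni odredjuju usmerene kod konacnih}, namely working inside the finite-order component (so that $\overline N(z)\neq G$ means $\overline N(z)\neq G_{<\infty}$) and applying the criterion only to pairs with $x,y\neq e$.
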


\begin{proof}
It is known that, for any $n,m\in\mathbb N$, $n\mid m$ implies that $\varphi(n)\mid\varphi(m)$, where $\varphi$ denotes Euler totient function. Moreover, $n\mid m$ implies $\varphi(n)<\varphi(m)$ unless $m=2n$ for an odd number $n$, or unless $n= m$. But when $m=2n$ for an odd number $n$, then, if an element $x$ of order $n$ is adjacent to an element $y$ of order $m$, the element $y$ is adjacent to an element $z$ of order $2$, while $x$ is adjacent to no such element. Note that, beside the identity element, elements of order $2$ are the only ones contained in one-element $\approx$-classes. Therefore, $x\rightarrowpm y$ if and only if one of the three conditions is fulfilled.
\end{proof}\\

The above lemma will be useful for us, but from the $Z^\pm$-power graph, we do not see $\approx$-classes. The following four lemmas will, with the help of Lemma \ref{odredjivanje pravaca}, enable us to determine directions of arches of the directed $Z^\pm$-power graph between different $\equiv$-classes and to determine directions of arches within $\equiv$-classes up to isomorphism. The following lemma is a generalization of \cite[Proposition 5]{power graph 2}, and it is one of the essential facts for the proof of the main result of this subsection.

\begin{lemma}\label{dva tipa klasa}
Let $\mathbf G$ be a group such that $\lvert\Cen(\Phi)\rvert=1$, where $\Phi$ is the finite-order component of $\mathcal G^\pm(\mathbf G)$.  Then every $\equiv$-class $C$ of $\Phi$ is one of the following forms:
\begin{enumerate}
\item $C$ is an $\approx$-class. Such an $\equiv$-class we call a {\bf simple $\equiv$-class}.
\item $C=\{x\in\langle y\rangle\mid o(x)\geq p^s\}$, where $p$ is a prime number, $y$ is an element of order $p^r$ for some $r\in\mathbb N$, and where $s\in\mathbb N$ satisfies $r>s>0$. In this case, $C$ is a union of $r-s+1$ $\approx$-classes, and we say that such an $\equiv$-class is a {\bf complex $\equiv$-class}.
\item $C=\bigcup_{k\geq s}[x_k]_\approx$ for some $s\geq 1$, where, for some prime number $p$, each $x_k$ is an element of order $p^k$, and where $x_k\in\langle x_{k+1}\rangle$ for all $k\geq s$. Such an $\equiv$-class we call an {\bf infinitely complex} $\equiv$-class.
\end{enumerate}
\end{lemma}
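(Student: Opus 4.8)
The plan is to analyze an arbitrary $\equiv$-class $C$ of $\Phi$ by comparing the cyclic subgroups generated by its elements. First I record the elementary fact that $x\approx y$ implies $x\equiv y$, so that every $\equiv$-class is a union of $\approx$-classes; if it consists of a single $\approx$-class we are in case 1. Since $\lvert\Cen(\Phi)\rvert=1$, the class of the identity is $\{e\}$ (any $y\equiv e$ would have $\overline N(y)=G_{<\infty}$, forcing $y\in\Cen(\Phi)$), so from now on I assume $C$ contains two distinct $\approx$-classes and $e\notin C$. The first observation is that any two elements $x,y\in C$ with $\langle x\rangle\neq\langle y\rangle$ are adjacent: from $x\equiv y$ and $x\neq y$ we get $y\in\overline N(y)=\overline N(x)$, hence $y\simppm x$. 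Thus their cyclic subgroups are comparable, and one is properly contained in the other.

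The key step, and the main obstacle, will be to show that all elements of $C$ have order a power of a single prime. Suppose $a\equiv b$ with $\langle a\rangle\subsetneq\langle b\rangle$ and $a\neq e$. Every $z\in\langle b\rangle$ lies in $\overline N(b)=\overline N(a)$, so $z\in\langle a\rangle$ or $a\in\langle z\rangle$; translated to the cyclic group $\langle b\rangle$, this says that every divisor of $m:=o(b)$ is comparable, under divisibility, to $d:=o(a)$, where $1<d<m$. A short number-theoretic argument then forces $m$ to be a prime power: if $m$ had two distinct prime divisors, then picking a prime $p\mid d$ and another prime $q\mid m$ one shows $q\mid d$ (since $d\mid q$ is impossible), so $d$ has two prime factors, and then each maximal prime-power divisor $p_i^{a_i}$ of $m$ must divide $d$ (as $d\nmid p_i^{a_i}$), giving $m\mid d$, a contradiction. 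Applying this to the pairs $\{x,w\}$ with $x,w\in C$ shows that all orders are powers of one prime $p$, and combined with the comparability above, the subgroups $\{\langle w\rangle\mid w\in C\}$ form a chain of cyclic $p$-subgroups, with at most one of each order.

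It remains to determine the set $E$ of exponents $k$ for which $C$ contains an element of order $p^k$; I claim $E$ is an interval of positive integers. The lower bound $1$ holds because $e\notin C$. For the interval property, given $\langle z\rangle\subset\langle z'\rangle\subset\langle z''\rangle$ on the chain with $z\equiv z''$, I will show $z'\equiv z''$ by comparing closed neighborhoods: inside $\langle z''\rangle$ every element is adjacent to both $z'$ and $z''$, while for $w\notin\langle z''\rangle$ one has $w\in\overline N(z')\Leftrightarrow\langle z'\rangle\subseteq\langle w\rangle$ and $w\in\overline N(z'')\Leftrightarrow\langle z''\rangle\subseteq\langle w\rangle$, and the equivalence of these two conditions follows from the corresponding condition for $z\equiv z''$ together with $\langle z\rangle\subset\langle z'\rangle$. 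Hence every order strictly between two orders occurring in $C$ also occurs, so $E$ is an interval.

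Finally I split on whether $E$ is finite. If $E=\{s,s+1,\dots,r\}$ with $r>s\geq1$, then any $y\in C$ of maximal order $p^r$ has $\langle y\rangle$ containing the whole chain, and since each order $p^k$ with $s\le k\le r$ is realized precisely by the unique order-$p^k$ subgroup of $\langle y\rangle$ together with all its generators, $C=\{x\in\langle y\rangle\mid o(x)\geq p^s\}$, a union of $r-s+1$ $\approx$-classes, which is case 2. If $E=\{s,s+1,\dots\}$ is infinite, choosing $x_k\in C$ of order $p^k$ for each $k\geq s$ gives $\langle x_k\rangle\subset\langle x_{k+1}\rangle$, i.e. $x_k\in\langle x_{k+1}\rangle$, and $C=\bigcup_{k\geq s}[x_k]_\approx$, which is case 3. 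The delicate points are the number-theoretic lemma forcing prime-power order and the neighborhood bookkeeping establishing the interval property; the remaining case analysis is organizational.
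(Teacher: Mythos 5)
Your proof is correct and follows essentially the same route as the paper's: reduce to classes containing at least two $\approx$-classes, use $\overline N(a)=\overline N(b)$ to force divisibility-comparability of the divisors of $o(b)$ with $o(a)$ and hence prime-power orders, establish the interval/chain structure via nested closed neighborhoods, and split according to whether an element of maximal order exists. The only cosmetic difference is that the paper proves the prime-power claim by directly exhibiting the incomparable divisor $o(x)q_1/q_2$ (giving an element adjacent to $y$ but not to $x$), whereas you derive the same contradiction by an abstract number-theoretic argument; the underlying idea is identical.
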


\begin{proof}
It is easily seen that every $\equiv$-class is a union of $\approx$-classes. Also, if all elements of an $\equiv$-class have the same order, then that $\equiv$-class is also an $\approx$-class, i.e. it is a simple $\equiv$-class.

Let $C$ be an $\equiv$-class, and suppose that $C$ contains elements $x$ and $y$ of different orders. Let us prove that the $\equiv$-class $C$ is complex or infinitely complex. Without loss of generality, let $o(x)<o(y)$. Then $o(x)$ is a divisor of $o(y)$ because $y\rightarrowpm x$. Let us show that $o(y)$ is a power of a prime number. If that is not the case, then there are different prime numbers $q_1$ and $q_2$ such that $q_1\mid\frac{o(y)}{o(x)}$ and $q_2\mid o(x)$. Then there is an element of order $\frac{o(x)q_1}{q_2}$ which is adjacent to $y$ and not to $x$, which is a contradiction. Therefore, the order of $y$ is a power of a prime number.

Now we know that there is a prime number $p$ such that $C$ contains only elements whose orders are powers of $p$. Because $\lvert\Cen(\Phi)\rvert=1$, $C$ does not contain $e$, i.e. it does not contain an element of order $p^0$. Further, if $x$ and $y$, such that $o(x)<o(y)$, belong to the same $\equiv$-class $C$, and if $z$ is an element such that $\langle x\rangle\leq\langle z\rangle\leq\langle y\rangle$, then $\overline N(y)\subseteq\overline N(z)\subseteq\overline N(x)$, because $x$, $y$ and $z$ have prime power orders. This implies that $\overline N(z)=\overline N(x)$, i.e. $z\equiv x$. Now, if $C$ contains an element of maximal order, then $C$ is a complex $\equiv$-class. Otherwise, $C$ is an infinitely complex $\equiv$-class. 
\end{proof}\\

In the above lemma, we introduced the notions of simple, complex and infinitely complex $\equiv$-classes. Although Lemma \ref{dva tipa klasa} deals with the case when the center of the finite-order component contains only the identity element of the group, we will use those terms when dealing with the finite-order component of any group.

\begin{lemma}\label{svako svakog tuce}
Let $\mathbf G$ be a group such that $\lvert\Cen(\Phi)\rvert=1$, where $\Phi$ is the finite-order component of $\mathcal G^\pm(\mathbf G)$, and let $x_0,y_0\in G_{<\infty}$. If $x_0\not\equiv y_0$ and $x_0\rightarrowpm y_0$, then $x\rightarrowpm y$ for all $x$ and $y$ such that $x\equiv x_0$ and $y\equiv y_0$.
\end{lemma}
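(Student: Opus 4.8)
The plan is to fix arbitrary $x\equiv x_0$ and $y\equiv y_0$ and show $x\rightarrowpm y$; since $x_0\not\equiv y_0$, the $\equiv$-classes $A:=[x_0]_\equiv$ and $B:=[y_0]_\equiv$ are distinct, so $x\neq y$ and, as $\approx$ refines $\equiv$, also $x\not\approx y$. First I would record that $x\simppm y$. Indeed, from $x_0\rightarrowpm y_0$ we get $y_0\in\overline N(x_0)=\overline N(x)$, hence $x\simppm y_0$ (the alternative $x=y_0$ is impossible because $x\not\equiv y_0$); then $x\in\overline N(y_0)=\overline N(y)$ gives $x\simppm y$. Because $x\simppm y$ while $x\not\approx y$, exactly one of $x\rightarrowpm y$ and $y\rightarrowpm x$ holds, so it suffices to rule out $y\rightarrowpm x$, i.e. to rule out $\langle x\rangle\subsetneq\langle y\rangle$.

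Next I would package Lemma \ref{dva tipa klasa} into a single uniform description of the two classes. Writing $\langle A\rangle$ for the subgroup generated by $A$ and $\tau_A:=\min\{o(z)\mid z\in A\}$, I claim that in every one of the three cases (simple, complex, infinitely complex) one has
\[
A=\{z\in\langle A\rangle\mid o(z)\geq\tau_A\},\qquad \langle A\rangle\text{ locally cyclic,}
\]
and likewise for $B$ with threshold $\tau_B$. For a complex or infinitely complex class this is immediate from the statement of Lemma \ref{dva tipa klasa} (with $\langle A\rangle$ cyclic, respectively a Pr\"ufer group); for a simple class $A=[x_0]_\approx$ one takes $\langle A\rangle=\langle x_0\rangle$ and $\tau_A=o(x_0)$, the point being that the generators of a cyclic group are exactly its elements of maximal order, so the formula degenerates correctly. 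This uniform form is what lets me avoid a separate treatment of the simple case.

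Finally I would run a symmetric threshold argument. Assume $\langle x\rangle\subsetneq\langle y\rangle$. From $x_0\rightarrowpm y_0$ and $x_0\not\approx y_0$ we get $\langle y_0\rangle\subsetneq\langle x_0\rangle\subseteq\langle A\rangle$, so $y_0\in\langle A\rangle$; since $y_0\notin A$, the displayed description forces $o(y_0)<\tau_A$, and as $y_0\in B$ gives $o(y_0)\geq\tau_B$, we obtain $\tau_B\leq o(y_0)<\tau_A$, hence $\tau_B<\tau_A$. Dually, $\langle x\rangle\subsetneq\langle y\rangle\subseteq\langle B\rangle$ puts $x\in\langle B\rangle$; since $x\notin B$ we get $o(x)<\tau_B$, while $x\in A$ gives $o(x)\geq\tau_A$, so $\tau_A\leq o(x)<\tau_B$ and $\tau_A<\tau_B$. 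The two inequalities $\tau_A<\tau_B$ and $\tau_B<\tau_A$ contradict each other, so $y\rightarrowpm x$ is impossible and $x\rightarrowpm y$ holds. The main obstacle is the bookkeeping in this last step: one must verify that the forbidden direction places $y_0$ inside $\langle A\rangle$ strictly below its threshold and $x$ inside $\langle B\rangle$ strictly below its threshold, and that membership in a class is governed purely by the order threshold. This is exactly where the hypothesis $\lvert\Cen(\Phi)\rvert=1$ enters, through Lemma \ref{dva tipa klasa}.
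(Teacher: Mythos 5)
Your proof is correct, and it reaches the conclusion by a genuinely different organization than the paper, although both arguments ultimately rest on the classification of $\equiv$-classes in Lemma \ref{dva tipa klasa}. The two proofs begin identically (adjacency transfers across classes: $y_0\in\overline N(x_0)=\overline N(x)$ gives $x\simppm y_0$, hence $x\simppm y$), but then diverge. The paper argues by cases on the type of class: if $[x_0]_\equiv$ is infinitely complex, it observes that no element outside that class has an arc \emph{into} it, so the arc at $x$ must point outward; if $[x_0]_\equiv$ is simple or complex, it assumes the forbidden direction $y_0\rightarrowpm x$, obtains the chain $\langle x\rangle\leq\langle y_0\rangle\leq\langle x_0\rangle$ of elements of prime power order, hence $\overline N(x_0)\subseteq\overline N(y_0)\subseteq\overline N(x)=\overline N(x_0)$, forcing $x_0\equiv y_0$, a contradiction; and it proves the two intermediate statements $x\rightarrowpm y_0$ and $x_0\rightarrowpm y$ rather than $x\rightarrowpm y$ in one step. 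You instead compress Lemma \ref{dva tipa klasa} into the uniform description $A=\{z\in\langle A\rangle\mid o(z)\geq\tau_A\}$, valid in all three cases (for a simple class because the generators of a cyclic group are precisely its elements of maximal order; for an infinitely complex class because a cyclic $p$-group has a unique subgroup of each order, so every element of $\langle A\rangle$ of order $p^j\geq\tau_A$ generates $\langle x_j\rangle$), and then play the thresholds of the two classes against each other: $x_0\rightarrowpm y_0$ unconditionally forces $\tau_B<\tau_A$, while the forbidden direction $\langle x\rangle\subsetneq\langle y\rangle$ would symmetrically force $\tau_A<\tau_B$. What your route buys is a single case-free main argument in which the role of the hypothesis $\lvert\Cen(\Phi)\rvert=1$ is completely localized in the threshold description; the cost is that the content of the paper's case analysis migrates into the verification of that description, which you call ``immediate'' but which does require the standard facts about cyclic $p$-groups noted above. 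The paper's treatment of the infinitely complex case also yields a slightly stronger by-product (no arcs enter such a class from outside) that your argument does not need and does not produce.
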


\begin{proof}
Suppose that $[x_0]_\equiv$ is an infinitely complex $\equiv$-class. Then there is no element $z\in G\setminus[x_0]_\equiv$ such that $z\rightarrowpm x_0$. Thus, for any $x\in[x_0]_\equiv$, and for any element $y$ such that $y\simppm x$ and $y\not\equiv x$, it follows that $x\rightarrowpm y$. Now it only remains to prove the lemma in the case when neither of $\equiv$-classes $[x]_\equiv$ and $[y]_\equiv$ is infinitely complex. Suppose further that none of $[x]_\equiv$ and $[y]_\equiv$ is an infinitely complex $\equiv$-class.

It suffices to show that $x\equiv x_0$, $y\equiv y_0$, $x_0\not\equiv y_0$ and $x_0\rightarrowpm y_0$ implies $x\rightarrowpm y_0$ and $x_0\rightarrowpm y$, for any $x_0,x,y,y_0\in G$. Suppose that $x\equiv x_0$, $x_0\not\equiv y_0$ and $x_0\rightarrowpm y_0$. If $[x_0]_\equiv$ is a simple $\equiv$-class, then it is easily seen that the implication holds, so suppose that $[x_0]_\equiv$ is a complex $\equiv$-class. Suppose that $x\not\rightarrowpm y_0$. Then $y_0\rightarrowpm x$, which implies that $\langle x\rangle\leq\langle y_0\rangle\leq\langle x_0\rangle$ and that the orders of $x_0$, $x$ and $y_0$ are powers of a prime number. Therefore, $\overline N(x_0)\subseteq\overline N(y_0)\subseteq \overline N(x)$, and thus $x_0\equiv y_0$, which is a contradiction. This proves that $x\rightarrowpm y_0$. It is proved similarly that $y\equiv y_0$, $x_0\not\equiv y_0$ and $x_0\rightarrowpm y_0$ implies $x_0\rightarrowpm y$. Thus, the lemma has been proved.
\end{proof}\\

The following lemma was proved by Cameron \cite{power graph 2}. Although he did not state it as a separate proposition, it was one of the essential steps in his proof that the power graph of a finite group determines the directed power graph. It is one of the crucial facts for this subsection too.

In the remainder of this subsection, for a set $S\subseteq G_{\infty}$, $\hat S$ shall denote the set
\[\hat S=\overline N(\overline N(S)),\]
where $\overline N(S)=\bigcap_{x\in S}\overline N(x)$.

\begin{lemma}\label{prepoznavanje tipova klasa}
Let $\mathbf G$ be a group such that $\lvert\Cen(\Phi)\rvert=1$, where $\Phi$ is the finite-order component of $\mathcal G^\pm(\mathbf G)$. Let $C$ be a complex $\equiv$-class. Then the following holds:
\begin{enumerate}
\item $|\hat C|=p^r$ and $|\hat C|-|C|=p^{s-1}$ for some $r,s\in\mathbb N$ such that $r>s> 0$;
\item $C$ is adjacent to no mutually non-adjacent $\equiv$-classes $D$ and $E$ such that $|D|,|E|\leq |C|$.
\end{enumerate}
Further, $p^r$ and $p^s$ are the maximum and the minimum order of an element of $C$, respectively.

If $C$ is a simple $\equiv$-class, then at least one of the above statements is not satisfied.
\end{lemma}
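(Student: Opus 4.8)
The plan is to pin down the two neighborhoods attached to $C$ and then read both assertions off them; the same computation, run in reverse, disposes of the simple case. Throughout write $C=\{x\in\langle y\rangle\mid o(x)\ge p^s\}$ with $o(y)=p^r$, and fix $x_s\in C$ with $o(x_s)=p^s$. Since every element of $C$ has the same closed neighborhood, $\overline N(C)=\bigcap_{c\in C}\overline N(c)=\overline N(y)=\overline N(x_s)$, and this set is $\langle y\rangle\cup\{z\mid\langle y\rangle\le\langle z\rangle\}$. Counting the elements of $\langle y\rangle$ of order at least $p^s$ gives $|C|=p^r-p^{s-1}$, and $p^r,p^s$ are visibly the maximal and minimal orders occurring in $C$.

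The decisive preliminary step, which I expect to be the main obstacle, is to show that every $u$ with $\langle y\rangle\le\langle u\rangle$ has $p$-power order. Suppose a prime $q\ne p$ divided $o(u)$. As $\langle u\rangle$ is cyclic and contains $\langle x_s\rangle$ as its unique subgroup of order $p^s$, it also contains an element $z_0$ of order $p^sq$ with $\langle x_s\rangle\le\langle z_0\rangle$. Then $x_s$ is a power of $z_0$, so $z_0\simppm x_s$, whereas $z_0\not\simppm y$ because $p^r\nmid p^sq$ (as $r>s$) and $z_0$ is not a power of $y$. This contradicts $x_s\equiv y$, which holds since $x_s,y\in C$. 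This fact is what forces the region of the graph lying above $y$ to be uniserial; without it an element of mixed order sitting above $y$ could enlarge $\hat C$.

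Granting this, I compute $\hat C=\overline N(\overline N(C))=\langle y\rangle$. The inclusion $\langle y\rangle\subseteq\hat C$ is immediate, since an element of $\langle y\rangle$ is comparable to every element of the chain $\langle y\rangle$ and is a power of every $z$ with $\langle y\rangle\le\langle z\rangle$. For the reverse inclusion, take $u\in\hat C$; adjacency to $y$ forces $u\in\langle y\rangle$ or $\langle y\rangle\le\langle u\rangle$. In the latter case the preliminary step makes $u$ a $p$-element, necessarily of order exceeding $p^r$, so $u\notin\langle y\rangle$, whence $u\notin C$ and $u\not\equiv y$; but for $p$-elements $\langle y\rangle\le\langle u\rangle$ gives $\overline N(u)\subseteq\overline N(y)$, so $u\not\equiv y$ yields some $z\in\overline N(y)\setminus\overline N(u)=\overline N(C)\setminus\overline N(u)$, a vertex of $\overline N(C)$ not adjacent to $u$, contradicting $u\in\overline N(\overline N(C))$. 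Thus $|\hat C|=p^r$ and $|\hat C|-|C|=p^{s-1}$ with $r>s>0$, which is statement 1 together with the order claim.

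For statement 2, the classes adjacent to $C$ consist of the vertices of $\overline N(C)\setminus C$, which splits into the order-$\le p^{s-1}$ part $L$ of $\langle y\rangle$ and the vertices of order $>p^r$ lying above $y$. Any class of the second kind contains an element of order at least $p^{r+1}$, hence has at least $\varphi(p^{r+1})=p^r(p-1)>|C|$ elements and is excluded by the hypothesis $|D|,|E|\le|C|$. Every surviving neighbouring class therefore lies inside the cyclic $p$-chain $L$, so any two of them are adjacent, and no mutually non-adjacent admissible pair can exist. Finally, for a simple class $C=[x]_\approx$ with $o(x)=n$ I argue that one condition must fail: when $n$ has two distinct prime divisors, two prime-order elements of $\langle x\rangle$ furnish mutually non-adjacent classes adjacent to $C$ of size at most $|C|$, breaking 2; when $n=p^k$ the computation above returns $\hat C=\langle x\rangle$, forcing $|\hat C|=p^k$ and $|\hat C|-|C|=p^{k-1}$, i.e. $r=s=k$, which violates $r>s$ and breaks 1. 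The only fiddly point here is bounding the sizes of the witnessing classes (equivalently, checking the prime-power arithmetic), but no new idea is required.
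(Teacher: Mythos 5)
Your treatment of the complex case is correct and follows the paper's route: both arguments reduce statement 1 to showing $\hat C=\langle y\rangle$, and your ``preliminary step'' is in fact a cleaner, self-contained justification of the paper's terse claim that every element above $y$ has $p$-power order; your proof of statement 2 and your witnesses for the two-prime simple case also coincide with the paper's. The genuine gap is in the prime-power simple case. You assert that ``the computation above returns $\hat C=\langle x\rangle$,'' but that computation rests on the preliminary step, whose proof uses an element $x_s\in C$ of order $p^s$ strictly smaller than $p^r=o(y)$ --- i.e.\ it uses exactly the hypothesis $r>s$ that a complex class supplies and a simple class does not. For a simple class the conclusion can fail outright: in $\mathbf G=\mathbb Z_2\times S_3$ (which does satisfy $\lvert\Cen(\Phi)\rvert=1$), take $x=(1,e)$, so that $C=[x]_\equiv=\{x\}$ is a simple class with $o(x)=2$, and let $u=(1,(123))$, an element of order $6$ with $u^3=x$. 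Then $\overline N(C)=\{(0,e),x,u,u^{-1}\}$ and $\hat C=\overline N(C)\cap\langle u\rangle=\{(0,e),x,u,u^{-1}\}\supsetneq\langle x\rangle$: the mixed-order elements $u,u^{-1}$ lie in $\hat C$. So $\hat C=\langle x\rangle$ is false in general, $\lvert\hat C\rvert$ need not equal $p^k$, and your derivation of $r=s$ collapses. (In this example condition 1 does fail, since $\lvert\hat C\rvert-\lvert C\rvert=3$ is not a power of $2$, but your argument does not establish this in general.)

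The missing idea --- and it is precisely how the paper closes this case --- is to argue \emph{conditionally}: assume condition 1 holds for the simple class $C$ of order $p^k$; then $\lvert C\rvert=p^r-p^{s-1}$ with $r>s>0$ gives $\lvert\hat C\rvert=p^r<2\bigl(p^r-p^{s-1}\bigr)=2\lvert C\rvert$, and this cardinality bound is what excludes elements of order greater than $p^k$ from $\hat C$: any such $z$ would satisfy $x\in\langle z\rangle$, hence $\langle x\rangle\cup[z]_\approx\subseteq\hat C$, a set of size at least $p^k+\varphi\bigl(o(z)\bigr)\geq p^k+\lvert C\rvert>2\lvert C\rvert$, a contradiction. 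Only after this exclusion does your computation $\hat C=\langle x\rangle$ become legitimate, forcing $r=s$ and contradicting the assumed condition 1. You need this counting step (or some substitute handling mixed-order elements above $x$) for the prime-power simple case to go through. A minor further point: the class $C=\{e\}$ should be disposed of separately, since your formula $\lvert\hat C\rvert-\lvert C\rvert=p^{k-1}$ is meaningless for $k=0$.
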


\begin{proof}
Suppose first that $C$ is a complex $\equiv$-class, and let $y$ be an element of $C$ of maximal order. Let $o(y)=p^r$ for a prime number $p$. Let us prove that $\hat C=\langle y\rangle$. Because $y$ has prime power order, $\langle y\rangle\subseteq\hat C$. Suppose now that there is an element $z\in \hat C\setminus\langle y\rangle$. Because $\hat C\subseteq\overline N(y)$, we get that $\langle y\rangle <\langle z\rangle$. If $z$ was not of prime power order, then $C$ would be a simple $\equiv$-class. Also, if $z$ is of prime power order, that $\overline N(z)\subseteq\overline N(y)$ because $\langle y\rangle\leq\langle z\rangle$. We also have that $\overline N(y)\subseteq\overline N(z)$ because $z\in \hat C$, and therefore $y\equiv z$, which is a contradiction. Now $\hat C=\langle y\rangle$ implies that the first condition is fulfilled, and that $p^r=\lvert \hat C\rvert$. Also, if $p^{s-1}$ is the maximal order of an element of $\hat C\setminus C$, and if $z$ is an element of $\hat C\setminus C$ of order $p^{s-1}$, then $\langle z\rangle=\hat C\setminus C$. This implies that $|\hat C|-|C|=p^{s-1}$.

Let us prove that, for the complex $\equiv$-class $C$, the second condition is fulfilled too. If $y\rightarrowpm z$, then $\lvert[z]_\equiv\rvert<\lvert C\rvert$, but any such $\equiv$-class is adjacent to all other $\equiv$-classes adjacent to $C$. Now it is sufficient to show that all $\equiv$-classes $[z]_\equiv$ adjacent to $C$, such that $z\rightarrowpm y$, have greater cardinality than $\lvert C\rvert$.  
If $z\rightarrowpm y$, then 
\[\lvert C\rvert<p^r\leq p^r(p-1)=\varphi(p^{r+1})\leq[z]_\equiv,\]
where $\varphi$ is Euler totient function. Therefore, the second condition is fulfilled too.

Let us prove now that a simple $\equiv$-class does not fulfill at least one of the two conditions. Let $C$ be a simple $\equiv$-class such that the order of its elements is divisible by at least two different prime numbers $p$ and $q$. Then there are classes $D$ and $E$ which contain elements of orders $p$ and $q$, respectively. Therefore, $C$ does not fulfill the second condition.

Suppose now that the elements of $C$ are of prime power order. Obviously, if $C=\{e\}$, then the first condition is not satisfied, so suppose further that there are $k\in\mathbb N$ and a prime number $p$ such that all elements of $C$ have order $p^k$. Let $y\in C$, and suppose that $C$ satisfies the first condition. Then
\[\lvert\hat C\rvert=p^r< p^r+(p^r-2p^{s-1})=2(p^r-p^{s-1})= 2\lvert C\rvert.\]
Therefore, $\hat C$ does not contain any element of order greater than $p^k$. Now, in a similar way as in the first paragraph of this proof, it can be shown that $\hat C=\langle y\rangle$, and therefore $C$ does not fulfill the first condition. This proves the lemma.
\end{proof}

\begin{lemma}\label{prepoznavanje redova u beskonacno komplesknoj klasi}
Let $\mathbf G$ be a group such that $\lvert\Cen(\Phi)\rvert=1$, where $\Phi$ is the finite-order component of $\mathcal G^\pm(\mathbf G)$. Let $C$ be an infinitely-complex $\equiv$-class. Then $\lvert\hat C\rvert-\lvert C\rvert =p^{s-1}$ for a prime number $p$ and for some $s\in\mathbb N$, and $p^s$ is the minimal order of an element of $C$.
\end{lemma}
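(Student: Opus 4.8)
The plan is to mirror the strategy of Lemma~\ref{prepoznavanje tipova klasa}, replacing the single cyclic ``envelope'' $\langle y\rangle$ of a complex class by a Pr\"ufer group. By Lemma~\ref{dva tipa klasa}, the infinitely-complex class has the form $C=\bigcup_{k\geq s}[x_k]_\approx$, where each $x_k$ has order $p^k$ for a fixed prime $p$ and $\langle x_s\rangle<\langle x_{s+1}\rangle<\cdots$ is a strictly increasing chain. Set $P=\bigcup_{k\geq s}\langle x_k\rangle$. Being an ascending union of cyclic $p$-groups of unbounded order, $P$ is a locally cyclic infinite $p$-group, hence a Pr\"ufer group $\mathbf C_{p^\infty}$; in particular $P$ has, for each $j\geq 0$, a unique subgroup of order $p^j$, and its elements of order exactly $p^k$ are precisely the generators of $\langle x_k\rangle$, i.e. $[x_k]_\approx$. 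Consequently $C$ is exactly the set of elements of $P$ of order at least $p^s$, so the minimal order of an element of $C$ is $o(x_s)=p^s$, which settles the second assertion at once.

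It then remains to show $\hat C=P$, after which the first assertion follows: writing $P_{s-1}$ for the unique subgroup of $P$ of order $p^{s-1}$ (the elements of $P$ of order at most $p^{s-1}$), we have $C=P\setminus P_{s-1}$, whence $\hat C\setminus C=P_{s-1}$ and $\lvert\hat C\rvert-\lvert C\rvert=\lvert P_{s-1}\rvert=p^{s-1}$. To compute $\hat C=\overline N(\overline N(C))$ I would first determine $\overline N(C)$. Since $C$ is an $\equiv$-class, all its elements share a single closed neighborhood, so $\overline N(C)=\bigcap_{x\in C}\overline N(x)=\overline N(x_k)$ for every $k\geq s$ simultaneously. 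One inclusion is immediate: each element of $P$ lies in some $\langle x_k\rangle$, hence is a power of $x_k$ and so belongs to $\overline N(x_k)=\overline N(C)$, giving $P\subseteq\overline N(C)$. For the reverse inclusion, observe that no element of infinite order is adjacent to a finite-order element, so $\overline N(C)\subseteq G_{<\infty}$; and if a finite-order $y$ lies in $\overline N(x_k)$ for all $k$, then for all large $k$ the relation $\langle x_k\rangle\subseteq\langle y\rangle$ is impossible, because $\lvert\langle y\rangle\rvert$ is finite while $\lvert\langle x_k\rangle\rvert=p^k\to\infty$; hence $\langle y\rangle\subseteq\langle x_k\rangle$ and $y\in P$. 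Thus $\overline N(C)=P$.

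The same two-sided argument, applied now to $P$ in place of $C$, gives $\hat C=\overline N(P)=P$. For $P\subseteq\overline N(P)$, note that any two elements $z,y\in P$ lie in a common cyclic $p$-group $\langle x_m\rangle$, in which all subgroups are totally ordered, so $\langle z\rangle$ and $\langle y\rangle$ are comparable and $z\in\overline N(y)$. Conversely, any $z\in\overline N(P)$ is adjacent to $x_k$ for arbitrarily large $k$, which by the same finiteness argument forces $z\in P$. Combining the two computations gives $\hat C=P$, and the lemma follows.

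I expect the only genuine obstacle to be the reverse inclusions $\overline N(C)\subseteq P$ and $\overline N(P)\subseteq P$. In the complex case of Lemma~\ref{prepoznavanje tipova klasa} the envelope is a finite cyclic group capped by an element of maximal order, and the argument is anchored at that top; here there is no element of maximal order, so that anchoring is unavailable. The device that replaces it is the unboundedness of the chain: a candidate neighbour of the whole class must be adjacent to elements $x_k$ of arbitrarily large order, and a finite-order element cannot have its cyclic subgroup contain all of these, so it is trapped inside $P$. This is precisely the point where the infinitude of $C$ enters, and it is what separates this lemma from its finite analogue.
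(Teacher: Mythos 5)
Your proof is correct and follows essentially the same route as the paper: both identify $\overline N(C)$ with the quasicyclic (Pr\"ufer) group $P=\bigcup_{k\geq s}\langle x_k\rangle$, show that $\hat C$ coincides with it, and then count the elements of order less than $p^s$. The only cosmetic difference is that you justify $\overline N(C)\subseteq P$ and $\overline N(P)\subseteq P$ via the unboundedness of the orders $p^k$, while the paper invokes the observation that no element outside an infinitely complex $\equiv$-class has an arrow into it (together with completeness of $\mathcal G^\pm(\mathbf C_{p^\infty})$); these amount to the same computation.
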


\begin{proof}
Because $C$ is an infinitely complex $\equiv$-class, there is a prime number $p$ such that orders of all elements of $C$ are powers of $p$. Let $p^s$ be the minimal order of an element of $C$. Because $\lvert\Cen(\Phi)\rvert=1$, $C$ does not contain the identity element of the group, and, therefore, $s>0$.

Because $x\rightarrowpm y$ for no elements $y\in C$ and $x\in G_{<\infty}\setminus C$, the set $\overline N(C)$ is the universe of a quasicyclic subgroup $\mathbf C_{p^\infty}$ of $\mathbf G$. Further, because $\mathcal G^\pm(\mathbf C_{p^\infty})$ is a complete graph, and because $\overline N(\overline N(C))\subseteq\overline N(C)$, it follows that $\hat C=\overline N(C)$. Therefore, $\lvert\hat C\rvert-\lvert C\rvert =p^{s-1}$. This proves the lemma.
\end{proof}\\

Before heading over to prove the main theorem of this subsection, we have just one more proposition to prove. Proposition \ref{pomocno tvrdjenje za p-lupe na perin predlog} covers the case when there is a prime number $p$ such that the orders of all elements of finite order of the group are powers of $p$.

\begin{proposition}\label{pomocno tvrdjenje za p-lupe na perin predlog}
Let $p$ be a prime number, let $\mathbf G$ and $\mathbf H$ be groups in which orders of all elements of finite order are powers of $p$, and let $\mathbf G$ have no intersection-free quasicyclic subgroup. Let $\mathcal G^\pm(\mathbf G)$ and $\mathcal G^\pm(\mathbf H)$ have isomorphic finite-order components. Then $\vec{\mathcal G}^\pm(\mathbf G)$ and $\vec{\mathcal G}^\pm(\mathbf H)$ have isomorphic finite-order components too.
\end{proposition}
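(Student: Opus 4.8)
The plan is to use the given isomorphism $\varphi$ between the finite-order components and to reconstruct the orientations one $\equiv$-class at a time. Write $\Phi$ and $\Psi$ for the finite-order components of $\mathcal G^\pm(\mathbf G)$ and $\mathcal G^\pm(\mathbf H)$. Since $\equiv$ and membership in $\Cen(\Phi)$ are defined solely through closed neighbourhoods, $\varphi$ carries $\equiv$-classes onto $\equiv$-classes and $\Cen(\Phi)$ onto $\Cen(\Psi)$; in particular $\lvert\Cen(\Phi)\rvert=\lvert\Cen(\Psi)\rvert$, so both groups fall under the same case. First I would settle the case $\lvert\Cen(\Phi)\rvert>1$ using Proposition \ref{kameronova propozicija}: since every finite-order element has $p$-power order, the only surviving alternatives are that $G_{<\infty}$ is a finite cyclic $p$-group, a Pr\"ufer group, or a non-cyclic non-Pr\"ufer $p$-group with a non-trivial centre. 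The Pr\"ufer alternative is exactly where the hypothesis is first used: if $\langle G_{<\infty}\rangle\cong\mathbf C_{p^\infty}$, then every finite cyclic subgroup lies inside it while every infinite cyclic subgroup meets it trivially, so it would be an intersection-free quasicyclic subgroup of $\mathbf G$, which is forbidden. In the finite cyclic case the component is a finite complete graph whose directed version is forced, and in the remaining case the cyclic central subgroup $\Cen(\Phi)$ is a common anchor from which Lemma \ref{odredjivanje pravaca} forces every orientation.

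The substantial case is $\lvert\Cen(\Phi)\rvert=1$, where Lemmas \ref{dva tipa klasa}, \ref{svako svakog tuce}, \ref{prepoznavanje tipova klasa} and \ref{prepoznavanje redova u beskonacno komplesknoj klasi} are available. Here I would show that the undirected component already encodes everything needed to recover the directed one up to isomorphism. By Lemma \ref{dva tipa klasa} each $\equiv$-class is simple, complex, or infinitely complex; the three types are told apart and their parameters extracted purely graph-theoretically, via the finiteness of $\lvert C\rvert$ together with Lemmas \ref{prepoznavanje tipova klasa} and \ref{prepoznavanje redova u beskonacno komplesknoj klasi}, which read off $p^r=\lvert\hat C\rvert$ and $p^{s-1}=\lvert\hat C\setminus C\rvert$ from closed neighbourhoods. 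Because all of this is invariant under $\varphi$, the class $\varphi(C)$ has the same type and the same parameters as $C$. The orientation of the arcs between two distinct classes is an all-or-nothing class-level datum by Lemma \ref{svako svakog tuce}, and its direction is determined by comparing $\approx$-class sizes through Lemma \ref{odredjivanje pravaca}; again this is preserved by $\varphi$. Finally, the digraph induced on a single class is determined up to isomorphism by the recovered data: a simple class is a complete symmetric digraph (fixed by its size), a complex class is a finite tower of mutually complete levels (fixed by $p,s,r$), and an infinitely complex class is the corresponding half-infinite tower (fixed by $p,s$).

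The delicate point, and where I expect the real work to lie, is the infinitely complex classes, which is precisely where the quasicyclic hypothesis earns its place. Such a class $C$ sits inside a quasicyclic subgroup $\hat C\cong\mathbf C_{p^\infty}$, and on the non-identity part of a quasicyclic group the $Z^\pm$-power graph is complete, so the undirected picture of $C$ retains no trace of the level sizes $\varphi(p^s),\varphi(p^{s+1}),\dots$ that organise the directed tower. What rescues the reconstruction is that $\hat C$ is not intersection-free: there is a cyclic subgroup $\mathbf K\not\leq\hat C$ with $\mathbf K\cap\hat C$ non-trivial, and the elements of this intersection therefore acquire neighbours outside $\hat C$, splitting the bottom of $\hat C$ off into separate, lower $\equiv$-classes. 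This anchoring is what makes $\lvert\hat C\setminus C\rvert=p^{s-1}$ a genuine, recoverable invariant and pins the value of $s$; without it one is left with a free-floating complete component whose orientation cannot be detected from the undirected graph. Verifying carefully that every infinitely complex class is anchored in this way, and that the anchoring makes its parameters graph-invariant, is the step I expect to require the most care.

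With these invariants established, the assembly is straightforward. For each $\equiv$-class $C$ of $\mathbf G$ the class $\varphi(C)$ carries an isomorphic induced digraph, so I fix a directed isomorphism $\psi_C$ between them; since by Lemma \ref{svako svakog tuce} the arcs between distinct classes depend only on class-level data that $\varphi$ respects, the maps $\psi_C$ glue into a single bijection $G_{<\infty}\to H_{<\infty}$ preserving all arcs both within and between classes, that is, an isomorphism of the finite-order components of $\vec{\mathcal G}^\pm(\mathbf G)$ and $\vec{\mathcal G}^\pm(\mathbf H)$.
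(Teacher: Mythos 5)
Your overall route is the paper's: recover from the undirected component the orders of the elements in each $\equiv$-class via the invariants $\lvert\hat C\rvert$ and $\lvert\hat C\setminus C\rvert$, use the all-or-nothing nature of arcs between distinct classes, reconstruct each class's induced digraph from its parameters, and glue. Your $\lvert\Cen(\Phi)\rvert=1$ branch is essentially the paper's argument. The genuine gap is in your branch $\lvert\Cen(\Phi)\rvert>1$ with $G_{<\infty}$ neither cyclic nor Pr\"ufer (case 5 of Proposition \ref{kameronova propozicija}). There your entire argument is the sentence that ``the cyclic central subgroup $\Cen(\Phi)$ is a common anchor from which Lemma \ref{odredjivanje pravaca} forces every orientation.'' That is not a proof: Lemma \ref{odredjivanje pravaca} decides directions in terms of $\approx$-class sizes, which are exactly the group data that the undirected graph does not display --- recovering them is the whole problem --- and the lemmas you rely on for that recovery elsewhere (Lemmas \ref{dva tipa klasa}, \ref{svako svakog tuce}, \ref{prepoznavanje tipova klasa} and \ref{prepoznavanje redova u beskonacno komplesknoj klasi}) are all stated only under the hypothesis $\lvert\Cen(\Phi)\rvert=1$, so you may not cite them in this case. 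The case is not degenerate: for $G_{<\infty}$ a generalized quaternion group $Q_{16}$ one has $\Cen(\Phi)=\{e,z\}$, and the six elements of orders $4$ and $8$ inside the cyclic subgroup of order $8$ form a single complex $\equiv$-class; knowing the center only settles the arcs incident to it (every non-central element points into it) and says nothing about the orientations inside that class or between non-central classes. The paper needs no case split at all: within the proof of the proposition it shows directly, for an arbitrary $\equiv$-class $C$ not containing $e$ and with no assumption on the center, that $\lvert\hat C\rvert$ equals the maximal order of an element of $C$ and that $\lvert\hat C\setminus C\rvert$ yields the minimal one, and it handles the class containing $e$ as $\Cen(\Phi)$, which is cyclic and is carried to the cyclic $\Cen(\Psi)$ of the same cardinality. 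To close your gap you would have to do the same, or re-prove those four lemmas for $p$-groups without the trivial-center hypothesis.

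A secondary point: your diagnosis of where the intersection-free hypothesis is indispensable does not fit this proposition. Here the same prime $p$ is prescribed for both $\mathbf G$ and $\mathbf H$, so an ``unanchored'' infinitely complex class ($\hat C\setminus C=\{e\}$, i.e.\ $s=1$) would cause no harm: its image under $\varphi$ has the same invariant, hence also consists of elements of orders $p,p^2,p^3,\dots$ for that same $p$, and the two directed towers agree. The hypothesis genuinely matters where the prime of $\mathbf H$ is not given in advance --- that is, at the level of Theorem \ref{neusmereni odredjuju usmerene kod konacnih} and in excluding the Pr\"ufer case, which you do correctly. Your use of it to force $s\geq 2$ is sound, merely unnecessary; this is a misreading of the hypothesis's role rather than an error in the argument.
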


\begin{proof}
Since $\mathbf G$  contains no intersection-free quasicyclic subgroup, $\mathbf G$ is not a Pr\" ufer group, and so neither is $\mathbf H$. Furthermore, $\mathcal G^\pm(\mathbf G)$ has no infinitely complex $\equiv_{\mathbf G}$-class $C$ such that $\lvert\hat C\setminus C\rvert=1$. Therefore, $\mathcal G^\pm(\mathbf H)$ also does not contain such infinitely complex $\equiv_{\mathbf H}$-classes, which implies that $\mathbf H$ also does not contain any intersection-free quasicyclic subgroup.

Let us denote the finite-order components of $\mathcal G^\pm(\mathbf G)$ and $\mathcal G^\pm(\mathbf H)$ by $\Phi$ and $\Psi$, respectively. Let $\varphi:G_{<\infty}\rightarrow H_{<\infty}$ be an isomorphism from $\Phi$ onto $\Psi$. For a finite $\equiv$-class $C$ contained in $G_{<\infty}$ or $H_{<\infty}$, let us show that the maximal order of an element from $C$ is equal to $\lvert\hat C\rvert$. Let $c$ be an element of $C$ of maximal order. Then $\langle c\rangle\subseteq\hat C$. Suppose that there is an element $d\in\hat C\setminus \langle c\rangle$. Then the order of $d$ is greater than $o(c)$, and $\langle c\rangle<\langle d\rangle$, which implies $\overline N(d)\subseteq\overline N(c)$. Also, the fact that $d\in \overline N(\overline N(c))$ implies $\overline N(c)\subseteq\overline N(d)$, and thus $c\equiv d$, which is a contradiction. Therefore, the maximal order of an element of $C$ is $\lvert\hat C\rvert$. Also, for an infinitely complex $\equiv$-class $C$, $\lvert \hat C\rvert=\aleph_0=\sup\{o(x)\mid x\in C\}$.

Now, for $x,y\in G_{<\infty}$ such that $x\not\equiv_{\mathbf G} y$ and $x\simppm_{\mathbf G} y$, $x\rightarrowpm_{\mathbf G} y$ implies $\lvert\overline N_{\mathbf G}(\overline N_{\mathbf G}(y))\rvert <\lvert\overline N_{\mathbf G}(\overline N_{\mathbf G}(x))\rvert$. Because $\varphi$ is an isomorphism from $\Phi$ onto $\Psi$, $\lvert\overline N_{\mathbf H}(\overline N_{\mathbf H}(\varphi(y)))\rvert <\lvert\overline N_{\mathbf H}(\overline N_{\mathbf H}(\varphi(x)))\rvert$ and $\varphi(x)\simppm_{\mathbf H}\varphi(y)$, and thus, by Lemma \ref{svako svakog tuce}, $\varphi(x)\rightarrowpm_{\mathbf H}\varphi(y)$. Now it remains to prove that $\varphi$ determines the isomorphisms between pairs of $\equiv$-classes of $\mathcal G^\pm(\mathbf G)$ and $\mathcal G^\pm(\mathbf H)$.

Let $C$ be an $\equiv_{\mathbf G}$-class, and let $D=\varphi(C)$. Suppose that $C$ is finite. Then $D$ is a finite $\equiv_{\mathbf H}$-class too. If $C$ contains the identity element, then $C=\Cen(\Phi)$ and $D=\Cen(\Psi)$. In this case $\mathbf C$ and $\mathbf D$ are cyclic subgroups of the same order, which implies that $\big(\vec{\mathcal G}^\pm(\mathbf G)\big)[C]\cong\big(\vec{\mathcal G}^\pm(\mathbf H)\big)[\varphi(C)]$. So suppose that $C$ does not contain the identity element. Then there is an element $z$ from $\langle C\rangle\setminus C$ of maximal order. Let us denote $\lvert\langle C\rangle\rvert=\lvert\hat C\rvert$ by $p^r$, and let us denote $\lvert\langle z\rangle\rvert=\lvert\overline N_{\mathbf G}(\overline N_{\mathbf G}(z))\rvert$ by $p^{s-1}$. Then $C$ contains elements of orders $p^s,p^{s+1},\dots,p^r$. In the same way we conclude that $D$ also contains elements of orders $p^s,p^{s+1},\dots,p^r$. Also, notice that $\Phi[C]$ and $\Psi[D]$ are complete subgraphs. Therefore, $\big(\vec{\mathcal G}^\pm(\mathbf G)\big)[C]\cong\big(\vec{\mathcal G}^\pm(\mathbf H)\big)[\varphi(C)]$ for any finite $\equiv_{\mathbf G}$-class $C$ of $\Phi$. 

Suppose now that $C$ is an infinitely-complex $\equiv_{\mathbf G}$-class. Then $\lvert\hat C\setminus C\rvert=p^{s-1}$ for some $s>1$. Because $\varphi$ is an isomorphism from $\mathcal G^\pm(\mathbf G)$ onto $\mathcal G^\pm(\mathbf H)$, $\lvert\hat D\setminus D\rvert=p^{s-1}$. It follows that both $C$ and $D$ are infinitely complex $\equiv$-classes containing elements of orders $p^s,p^{s+1},p^{s+2},\dots$, which implies $\big(\vec{\mathcal G}^\pm(\mathbf G)\big)[C]\cong\big(\vec{\mathcal G}^\pm(\mathbf H)\big)[\varphi(C)]$. This proves that $\vec{\mathcal G}^\pm(\mathbf G)$ and $\vec{\mathcal G}^\pm(\mathbf H)$ have isomorphic finite-order components.
\end{proof}

\begin{theorem}\label{neusmereni odredjuju usmerene kod konacnih}
Let $\mathbf G$ and $\mathbf H$ be groups, and let $\mathbf G$ have no intersection-free quasicyclic subgroup. If $Z^\pm$-power graphs of $\mathbf G$ and $\mathbf H$ have isomorphic finite-order components, then their directed $Z^\pm$-power graphs have isomorphic finite-order components too.
\end{theorem}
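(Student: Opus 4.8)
The plan is to upgrade the given isomorphism $\varphi\colon G_{<\infty}\to H_{<\infty}$ between the finite-order components $\Phi=\mathcal G^\pm(\mathbf G)[G_{<\infty}]$ and $\Psi=\mathcal G^\pm(\mathbf H)[H_{<\infty}]$ into an isomorphism of the directed finite-order components. Following Cameron's strategy for finite groups, I would recover the arc directions \emph{between} distinct $\equiv$-classes from graph-theoretic invariants that $\varphi$ automatically preserves, and separately reconstruct the induced directed subgraph on \emph{each} $\equiv$-class up to isomorphism; gluing these pieces yields the desired directed isomorphism. The proof splits according to $\lvert\Cen(\Phi)\rvert$, exactly as foreshadowed in the discussion after Proposition \ref{kameronova propozicija}.

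First I would dispose of the case $\lvert\Cen(\Phi)\rvert>1$ using Proposition \ref{kameronova propozicija}. The hypothesis that $\mathbf G$ has no intersection-free quasicyclic subgroup rules out possibility (1): were $G_{<\infty}$ a Pr\"ufer group, any cyclic subgroup $\mathbf K\not\leq G_{<\infty}$ meeting it nontrivially would share a finite-order element and hence be finite, forcing $\mathbf K\leq G_{<\infty}$, a contradiction; so $G_{<\infty}$ would be intersection-free, which is forbidden. In possibilities (2) and (5) every finite-order element has $p$-power order, so Proposition \ref{pomocno tvrdjenje za p-lupe na perin predlog} applies directly. In possibilities (3) and (4), $G_{<\infty}$ is the universe of a finite cyclic group, so $\Phi$ is finite and the directions are recovered from orders and $\approx$-class sizes exactly as in the finite-group case treated in \cite{power graph 2}.

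The substantive case is $\lvert\Cen(\Phi)\rvert=1$, handled with the $\equiv$-class machinery. By Lemma \ref{dva tipa klasa} every $\equiv$-class is simple, complex, or infinitely complex. Lemmas \ref{prepoznavanje tipova klasa} and \ref{prepoznavanje redova u beskonacno komplesknoj klasi} let one read off the type of each class and its order parameters from the graph-invariants $\lvert C\rvert$ and $\lvert\hat C\rvert$; since $\hat C$ is defined purely through closed neighborhoods, $\varphi$ carries each class to one of the same type and parameters. Here the hypothesis is essential: it amounts to there being no infinitely complex class with $\lvert\hat C\setminus C\rvert=1$, so for every infinitely complex class the prime $p$ is recoverable from $\lvert\hat C\setminus C\rvert=p^{s-1}>1$. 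Lemmas \ref{odredjivanje pravaca} and \ref{svako svakog tuce} then give that the direction of the arcs between any two distinct $\equiv$-classes is graph-determined and uniform, so $\varphi$ preserves it; and within each class the induced directed subgraph is pinned down up to isomorphism by its type and parameters (a complex class being $\{x\in\langle y\rangle\mid o(x)\geq p^s\}$ and an infinitely complex one the truncation of $\vec{\mathcal G}^\pm(\mathbf C_{p^\infty})$ above level $s$). Composing $\varphi$ on the inter-class arcs with these per-class directed isomorphisms produces the global directed isomorphism.

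The main obstacle is exactly the infinitely complex classes, and pinpointing why the hypothesis is indispensable. On such a class the undirected graph is complete, hence blind to the internal order-structure, and the only finer invariant is $\lvert\hat C\setminus C\rvert$. When this equals $1$ — precisely the intersection-free quasicyclic case — it carries no information about $p$, and since $\vec{\mathcal G}^\pm(\mathbf C_{p^\infty})$ and $\vec{\mathcal G}^\pm(\mathbf C_{q^\infty})$ are non-isomorphic for $p\neq q$ while both their power graphs are complete, the direction data genuinely cannot be reconstructed; the hypothesis removes exactly this ambiguity. The remaining work is bookkeeping: verifying that $\mathbf H$ inherits the relevant structural properties through $\varphi$ (in particular that it too has no such degenerate class, as argued in Proposition \ref{pomocno tvrdjenje za p-lupe na perin predlog}), and checking that the inter-class and intra-class isomorphisms are mutually consistent when assembled into one map.
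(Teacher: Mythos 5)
Your proposal follows the paper's own proof essentially step for step: the same case split on $\lvert\Cen(\Phi)\rvert$, with Proposition \ref{kameronova propozicija} and Proposition \ref{pomocno tvrdjenje za p-lupe na perin predlog} disposing of the non-trivial-center case, and the $\equiv$-class machinery of Lemmas \ref{dva tipa klasa}, \ref{prepoznavanje tipova klasa} and \ref{prepoznavanje redova u beskonacno komplesknoj klasi}, combined with Lemmas \ref{odredjivanje pravaca} and \ref{svako svakog tuce}, recovering inter-class arc directions and intra-class directed subgraphs in the trivial-center case, exactly as the paper does. The only cosmetic deviations (ruling out the Pr\"ufer possibility explicitly from the hypothesis rather than inside Proposition \ref{pomocno tvrdjenje za p-lupe na perin predlog}, and citing Cameron's finite-group result for the finite cyclic cases where the paper simply notes that cyclic groups of equal order are isomorphic) do not alter the argument.
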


\begin{proof}
Let $\mathbf G$ and $\mathbf H$ be groups, and let $\mathbf G$ have no intersection-free quasicyclic subgroup. Let us denote the finite-order components of $\mathcal G^\pm(\mathbf G)$ and $\mathcal G^\pm(\mathbf H)$ by $\Phi$ and $\Psi$, respectively. Let $\Phi\cong\Psi$, and let $\lvert\Cen(\Phi)\rvert>1$. Then $\lvert\Cen(\Psi)\rvert>1$ too. Now, by Proposition \ref{kameronova propozicija}, $\mathbf G_{<\infty}$ and $\mathbf H_{<\infty}$ are either both cyclic groups of the same order, or there is a prime number $p$ such that the order of every element of $G_{<\infty}$ and $H_{<\infty}$ is a power of $p$. In the first case, trivially, the $Z^\pm$-power graphs  of $\mathbf G$ and $\mathbf H$ have isomorphic finite-order components, while in the second case that follows by Proposition \ref{pomocno tvrdjenje za p-lupe na perin predlog}.

Suppose further that $\Phi\cong\Psi$ and $\lvert\Cen(\Phi)\rvert=1$. Then $\lvert\Cen(\Psi)\rvert=1$ too. Let $\psi:G\rightarrow H$ be an isomorphism from $\Phi$ onto $\Psi$. Notice that if $C$ is an $\equiv_{\mathbf G}$-class, then $\psi(C)$ is also an $\equiv_{\mathbf H}$-class. Also, by Lemma \ref{prepoznavanje tipova klasa}, $C$ and $\psi(C)$ are either both simple $\equiv$-classes, or they are both complex $\equiv$-classes, or they are both infinitely-complex $\equiv$-classes. For sets $X\subseteq G_{<\infty}$ and $Y\subseteq H_{<\infty}$, we say that they are corresponding if there is an $\equiv_{\mathbf G}$-class $C$ such that $X\subseteq C$ and $Y\subseteq\psi(C)$. 

Just like in the proof of Proposition \ref{pomocno tvrdjenje za p-lupe na perin predlog}, from the fact that $\mathbf G$ does not have any intersection-free quasicyclic subgroup, we conclude that $\mathbf H$ has no intersection-free quasicyclic subgroup too. Therefore, by Lemma \ref{prepoznavanje redova u beskonacno komplesknoj klasi}, for any infinitely complex $\equiv$-class of $\Phi$ or $\Psi$ it is possible to determine all orders of elements contained in $C$. Also, by Lemma \ref{prepoznavanje tipova klasa}, for every complex $\equiv$-class contained in $G_{<\infty}$ or $H_{<\infty}$ one can determine the orders of elements contained in this $\equiv$-class. Therefore, each complex or infinitely complex $\equiv_{\mathbf G}$-class contains elements of the same orders as its corresponding $\equiv_{\mathbf H}$-class. In the remainder of this proof, when mentioning an $\approx$-class or an $\equiv$-class, we assume that those are classes that contain elements of finite order. We conclude that there is a bijection $\vartheta: G_{<\infty}\rightarrow H_{<\infty}$ which maps every $\approx_{\mathbf G}$-class onto the corresponding $\approx_{\mathbf H}$-class. Notice that, by Lemma \ref{svako svakog tuce}, all $\approx$-classes contained in the same complex or infinitely complex $\equiv$-class $C$ relate in the same way in the directed $Z^\pm$-power graph to any other $\approx$-class outside the $\equiv$-class $C$. Finally, $\vartheta$ is an isomorphism from $\big(\vec{\mathcal G}(\mathbf G)\big)[G_{<\infty}]$ onto $\big(\vec{\mathcal G}(\mathbf H)\big)[H_{<\infty}]$, because, by Lemma \ref{odredjivanje pravaca}, for two adjacent $\approx$-classes one can tell which one contains elements of greater order. This proves the theorem.
\end{proof}

\subsection{Putting the Pieces Together}

Now we are ready to prove the main result of this paper.

\begin{theorem}\label{Zpm-stepeni graf skoro uvek odredjuje usmereni stepeni}
Let $\mathbf G$ and $\mathbf H$ be groups such that $\mathcal G^\pm(\mathbf G)\cong\mathcal G^\pm(\mathbf H)$. If $\mathbf G$ has no intersection-free quasicyclic subgroup, then $\vec{\mathcal G}^\pm(\mathbf G)\cong\vec{\mathcal G}^\pm(\mathbf H)$.
\end{theorem}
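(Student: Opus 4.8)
The plan is to assemble the theorem from the two subsection results, after observing that any isomorphism of $Z^\pm$-power graphs respects the finite-order/infinite-order splitting. First I would fix an isomorphism $\varphi:G\to H$ from $\mathcal G^\pm(\mathbf G)$ onto $\mathcal G^\pm(\mathbf H)$, which exists by hypothesis. Applying Lemma \ref{konacni se slikaju na konacne} gives $\varphi(G_{<\infty})=H_{<\infty}$, and hence $\varphi(G_\infty)=H_\infty$. Thus $\varphi$ restricts to an isomorphism between the finite-order components of $\mathcal G^\pm(\mathbf G)$ and $\mathcal G^\pm(\mathbf H)$ and to an isomorphism between their infinite-order components; in particular, both pairs of components are isomorphic.

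With the two parts separated, I would treat them independently. For the infinite-order parts, Theorem \ref{izomorfni podgrafovi sa elementima beskonacnog reda} applies directly and yields an isomorphism $\alpha$ between the infinite-order components of $\vec{\mathcal G}^\pm(\mathbf G)$ and $\vec{\mathcal G}^\pm(\mathbf H)$. For the finite-order parts, this is precisely where the hypothesis that $\mathbf G$ has no intersection-free quasicyclic subgroup enters: Theorem \ref{neusmereni odredjuju usmerene kod konacnih} then gives an isomorphism $\beta$ between the finite-order components of $\vec{\mathcal G}^\pm(\mathbf G)$ and $\vec{\mathcal G}^\pm(\mathbf H)$.

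It then remains to glue $\alpha$ and $\beta$ into a single isomorphism of the whole directed graph. The key observation legitimizing this is the one already recorded before Lemma \ref{konacni se slikaju na konacne}: no element of infinite order is adjacent in $\mathcal G^\pm$ to an element of finite order, so in the directed graph there is likewise no arc between $G_\infty$ and $G_{<\infty}$, and the same holds for $\mathbf H$. Hence $\vec{\mathcal G}^\pm(\mathbf G)$ is the disjoint union of its infinite-order and finite-order components, and likewise for $\mathbf H$. Since $\alpha$ and $\beta$ have disjoint domains $G_\infty$ and $G_{<\infty}$ whose union is $G$, and disjoint ranges $H_\infty$ and $H_{<\infty}$ whose union is $H$, the map $\alpha\cup\beta$ is a bijection $G\to H$ which preserves arcs within each part; and since neither graph has arcs across the two parts, it respects non-arcs across them as well. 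Therefore $\alpha\cup\beta$ is an isomorphism $\vec{\mathcal G}^\pm(\mathbf G)\cong\vec{\mathcal G}^\pm(\mathbf H)$.

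I do not anticipate a genuine obstacle here, as all the substantive content has been placed into Theorem \ref{izomorfni podgrafovi sa elementima beskonacnog reda} and Theorem \ref{neusmereni odredjuju usmerene kod konacnih}. The only point demanding a little care is the gluing step, where one must check that the absence of arcs between the finite-order and infinite-order vertices makes the union of the two partial isomorphisms automatically arc-preserving in both directions; this is immediate from the structural remark above, and so the theorem follows.
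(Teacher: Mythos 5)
Your proposal is correct and follows essentially the same route as the paper: split via Lemma \ref{konacni se slikaju na konacne} into finite-order and infinite-order components, then invoke Theorem \ref{izomorfni podgrafovi sa elementima beskonacnog reda} and Theorem \ref{neusmereni odredjuju usmerene kod konacnih} respectively. The only difference is that you make explicit the gluing of the two partial isomorphisms (justified by the absence of edges between $G_{<\infty}$ and $G_\infty$ in $\mathcal G^\pm(\mathbf G)$), a step the paper leaves implicit.
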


\begin{proof}
By Lemma \ref{konacni se slikaju na konacne}, $\mathcal G^\pm(\mathbf G)$ and $\mathcal G^\pm(\mathbf H)$ have isomorphic infinite-order components and isomorphic finite-order components. Then, by Theorem \ref{izomorfni podgrafovi sa elementima beskonacnog reda} and Theorem \ref{neusmereni odredjuju usmerene kod konacnih}, $\vec{\mathcal G}^\pm(\mathbf G)$ and $\vec{\mathcal G}^\pm(\mathbf H)$, too, have isomorphic infinite-order components and isomorphic finite-order components. Therefore, groups $\mathbf G$ and $\mathbf H$ have isomorphic directed $Z^\pm$-power graphs.
\end{proof}\\

By Theorem \ref{medjusobno odredjivanje opsteg i nenula-stepenog grafa}, the ensuing corollary follows directly from Theorem \ref{Zpm-stepeni graf skoro uvek odredjuje usmereni stepeni}.

\begin{corollary}\label{stepeni graf skoro uvek odredjuje usmereni stepeni}
Let $\mathbf G$ and $\mathbf H$ be groups such that $\mathcal G(\mathbf G)\cong\mathcal G(\mathbf H)$. If $\mathbf G$ has no intersection-free quasicyclic subgroup, then $\vec{\mathcal G}(\mathbf G)\cong\vec{\mathcal G}(\mathbf H)$.
\end{corollary}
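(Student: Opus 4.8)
The plan is to reduce the corollary to the two theorems already in hand and then to bridge the small gap between the directed $Z^\pm$-power graph and the directed power graph. First I would apply Theorem \ref{medjusobno odredjivanje opsteg i nenula-stepenog grafa} to the hypothesis $\mathcal G(\mathbf G)\cong\mathcal G(\mathbf H)$, obtaining $\mathcal G^\pm(\mathbf G)\cong\mathcal G^\pm(\mathbf H)$. Since $\mathbf G$ has no intersection-free quasicyclic subgroup, Theorem \ref{Zpm-stepeni graf skoro uvek odredjuje usmereni stepeni} then yields $\vec{\mathcal G}^\pm(\mathbf G)\cong\vec{\mathcal G}^\pm(\mathbf H)$. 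What remains is to promote an isomorphism of directed $Z^\pm$-power graphs to an isomorphism of directed power graphs.

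For that last step, the key observation is that $\vec{\mathcal G}(\mathbf G)$ is obtained from $\vec{\mathcal G}^\pm(\mathbf G)$ merely by adjoining the arcs arising from the exponent $n=0$. An arc $x\rightarrow y$ lies in $\vec{\mathcal G}(\mathbf G)$ but not in $\vec{\mathcal G}^\pm(\mathbf G)$ exactly when $y=x^{0}=e$ with $x\neq e$, and such an arc is genuinely new only when $x$ has infinite order, since any finite-order $x$ already satisfies $x^{o(x)}=e$ with $o(x)\neq 0$. Hence
\[E\big(\vec{\mathcal G}(\mathbf G)\big)=E\big(\vec{\mathcal G}^\pm(\mathbf G)\big)\cup\{(x,e)\mid x\in G_\infty\},\]
and likewise for $\mathbf H$. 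It therefore suffices to show that any isomorphism $\varphi\colon\vec{\mathcal G}^\pm(\mathbf G)\rightarrow\vec{\mathcal G}^\pm(\mathbf H)$ sends the identity to the identity and infinite-order elements to infinite-order elements.

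Both facts are visible inside $\vec{\mathcal G}^\pm$. The identity is the unique vertex of out-degree zero: for every $x\neq e$ one has $x^{2}\neq x$, hence $x\rightarrowpm x^{2}$, whereas $e$ has no out-neighbour; thus $\varphi(e_{\mathbf G})=e_{\mathbf H}$. For the infinite-order elements I would use that, as already noted, $G_{<\infty}$ induces a connected component of $\vec{\mathcal G}^\pm(\mathbf G)$ and that this component contains $e$. A digraph isomorphism preserves the underlying undirected graph and hence its connected components, so $\varphi$ carries the component of $e_{\mathbf G}$, namely $G_{<\infty}$, onto the component of $e_{\mathbf H}$, namely $H_{<\infty}$; consequently $\varphi(G_\infty)=H_\infty$. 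Combining the two facts, $\varphi$ maps $\{(x,e_{\mathbf G})\mid x\in G_\infty\}$ bijectively onto $\{(y,e_{\mathbf H})\mid y\in H_\infty\}$, so the same $\varphi$ is an isomorphism $\vec{\mathcal G}(\mathbf G)\rightarrow\vec{\mathcal G}(\mathbf H)$.

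I do not anticipate a real obstacle: the mathematical substance is carried entirely by Theorems \ref{medjusobno odredjivanje opsteg i nenula-stepenog grafa} and \ref{Zpm-stepeni graf skoro uvek odredjuje usmereni stepeni}, and the only thing to argue is the purely graph-theoretic bridge of the previous paragraph. The single point deserving care is the precise accounting of which arcs distinguish $\vec{\mathcal G}$ from $\vec{\mathcal G}^\pm$---namely exactly the arcs from infinite-order vertices into the identity---so that recognising $e$ and the finite/infinite dichotomy inside $\vec{\mathcal G}^\pm$ is enough to conclude.
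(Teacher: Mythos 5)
Your proposal is correct and takes essentially the same route as the paper: the corollary is deduced by combining Theorem \ref{medjusobno odredjivanje opsteg i nenula-stepenog grafa} with Theorem \ref{Zpm-stepeni graf skoro uvek odredjuje usmereni stepeni}. Your explicit bridge from $\vec{\mathcal G}^\pm(\mathbf G)\cong\vec{\mathcal G}^\pm(\mathbf H)$ to $\vec{\mathcal G}(\mathbf G)\cong\vec{\mathcal G}(\mathbf H)$ (the identity is the unique vertex of out-degree zero, the finite-order elements form its connected component, and the only extra arcs are those from infinite-order vertices into the identity) is a correct verification of a step the paper treats as immediate.
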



\section*{Acknowledgment}

The author acknowledges financial support of the Ministry of Education, Science and Technological Development of the Republic of Serbia (Grant No. 451-03-68/2020-14/200125).


\end{document}